\documentclass[12pt]{amsart}
\usepackage{amssymb}
\usepackage{amsmath}
\textwidth=17cm
\textheight=23cm
\oddsidemargin=-0.5cm
\evensidemargin=-0.5cm

\newcommand{\C}{\mathbb C}

\newcommand{\D}{\mathbb D}
\newcommand{\F}{\mathbb Z}

\newcommand{\N}{\mathbb N}

\renewcommand{\P}{\mathbb P}

\newcommand{\Z}{\mathbb Z}

\newcommand{\cI}{\mathcal I}
\newcommand{\re}{\mathrm{Re}}

\newcommand{\pa}{\partial}

\newcommand{\na}{\nabla}
\renewcommand{\a}{\alpha}
\renewcommand{\b}{\beta}
\newcommand{\g}{\gamma}
\renewcommand{\d}{\delta}

\newcommand{\f}{\varphi}

\newcommand{\G}{\mathit{\Gamma}}

\newcommand{\W}{\mathit{\Omega}}
\newcommand{\w}{\omega}

\newcommand{\la}{\langle}
\newcommand{\ra}{\rangle}
\newcommand{\tr}{\;^t}

\newcommand{\cX}{\widetilde X}
\newcommand{\wt}{\widetilde}
\newcommand{\bu}{\bullet}
\newcommand{\cH}{\mathcal{H}}
\newcommand{\cE}{\mathcal{E}}
\newcommand{\cR}{\mathcal{R}}
\newcommand{\pr}{\mathrm{pr}}

\newtheorem{theorem}{Theorem}[section]
\newtheorem{definition}{Definition}[section]
\newtheorem{proposition}{Proposition}[section]
\newtheorem{lemma}{Lemma}[section]
\newtheorem{cor}{Corollary}[section]
\newtheorem{fact}{Fact}[section]
\newtheorem{remark}{Remark}[section]

\makeatletter
\@addtoreset{equation}{section}

\makeatother
\title[Pfaffian of $F_A$]
{Pfaffian of Lauricella's hypergeometric system $F_A$}
\author{Keiji Matsumoto}
\email{matsu@math.sci.hokudai.ac.jp}
\address{
Department of Mathematics\\
Hokkaido University\\
Sapporo 060-0810, Japan
}

\keywords{Pfaffian system, 
Lauricella's hypergeometric differential equation, Twisted cohomology group
}
\subjclass[2010]{Primary 33C65; Secondary 58A17.}
\date{\today}

\begin{document}

\begin{abstract}
We give a Pfaffian system of differential equations annihilating 
Lauricella's hypergeometric series $F_A(a,b,c;x)$ of $m$-variables. 
This system is integrable of rank $2^m$. 
To express the connection form of this system, 
we make use of the intersection form of twisted cohomology groups 
with respect to integrals representing solutions of this system.
\end{abstract}
\maketitle
\section{Introduction}
Lauricella's hypergeometric series $F_A(a,b,c;x)$ of $m$-variables 
$x=(x_1,\dots,x_m)$ with parameters $a$, $b=(b_1,\dots,b_m)$ and 
$c=(c_1,\dots,c_m)$ is defined as 
$$
F_A(a,b,c;x)
=\sum_{n\in \N^m}
\frac{(a,\sum_{i=1}^m n_i)\prod_{i=1}^m(b_i,n_i)}
{\prod_{i=1}^m(c_i,n_i)\prod_{i=1}^m(1,n_i)}
\prod_{i=1}^m x_i^{n_i},
$$
where  $\N=\{0,1,2,\dots\}$, $n=(n_1,\dots,n_m)$, 
$c_1,\dots,c_m\notin -\N=\{0,-1,-2,\dots,\}$, 
and $(c_i,n_i)=c_i(c_i+1)\cdots(c_i+ n_i-1)=\G(c_i+ n_i)/\G(c_i)$.
It is known that Lauricella's hypergeometric system of differential equations 
annihilating $F_A(a,b,c;x)$ is integrable of rank $2^m$ with the singular 
locus 
$$
\big\{x \in \C^m\mid \prod_{i=1}^m x_i\prod_{v\in \F_2^m}(1-v\tr x)=0\big\},
$$
where $v=(v_1,\dots,v_m)$ and $v_i\in \Z_2=\{0,1\}\subset \N$.
In this paper, we give a Pfaffian system of Lauricella's hypergeometric 
system $F_A$ of differential equations 
under the non-integral conditions (\ref{eq:non-integral})
for linear combinations of parameters $a$, $b$ and $c$.
The connection form 
of the Pfaffian system is expressed 
in terms of logarithmic $1$-forms of defining equations of the singular locus,
see Corollary \ref{cor:Pfaffian}. 
When the number of variables is two, this system is called Appell's $F_2$,
of which Pfaffian system is studied by several authors; 
refer to \cite{Ka} and the references therein.

To express the connection form of this system, 
we study linear transformations $\cR_{0}^i$ and $\cR_v$ 
representing local behaviors of the connection form  
around the components $S_0^i=\{x\in \C^m\mid x_i=0\}$ and 
$S_v=\{x\in \C^m\mid 1-v\tr x=0\}$ of the singular locus.
They can be regarded as linear transformations of 
the twisted cohomology groups with respect to 
integrals representing solutions of this system.
We show that they have two eigenvalues for generic parameters. 
It is a key property for characterizing $\cR_{0}^i$ and $\cR_v$ that 
eigenspaces of each of them are orthogonal to each other 
with respect to the intersection form of the twisted cohomology groups.  
By using the intersection form, we  express $\cR_{0}^i$ and $\cR_v$ 
without choosing a basis of the twisted cohomology group, see
Lemma \ref{lem:refA} and Theorem \ref{th:connection}. 
Their matrix representations in Corollary \ref{cor:connection matrix} 
imply the Pfaffian system of Lauricella's $F_A$.

The monodromy representation of this system is studied in \cite{MY}. 
Its circuit transformations are expressed in terms of the intersection form 
of twisted homology groups, which are dual to the twisted cohomology groups. 

Refer to \cite{M2} for the study of a Pfaffian and the monodromy 
representation of Lauricella's system $F_D$ in terms of the 
intersection form of twisted (co)homology groups.

\section{Lauricella's $F_A$-system of
hypergeometric differential equations}
In this section, we collect some facts about 
Lauricella's hypergeometric system $F_A$ of differential equations, for which 
we refer to \cite{AoKi},
\cite{L}, \cite{MY} and \cite{Y1}.
Lauricella's hypergeometric  series $F_A(a,b,c;x)$ converges in the domain 
$$
\D=
\Big\{x\in \C^m\Big| \sum\limits_{i=1}^m |x_i|<1\Big\}
$$
and admits the integral representation
\begin{equation}
\label{eq:Euler}
\!\!\Big[\prod_{i=1}^m\frac{ \G(c_i)} 
{\G(b_i)\G(c_i\!-\!b_i)}\Big]
\int_{(0,1)^m} u(a,b,c;x,t)\frac{dT}{t_1\cdots t_m},
\end{equation}
where $dT=dt_1\wedge\cdots\wedge dt_m$,
$$u(x,t)=u(a,b,c;x,t)=
\Big[\prod_{i=1}^m t_i^{b_i}(1\!-\! t_i)^{c_i\!-\! b_i\!-\!1}\Big]
(1\!-\!\Sigma_{i=1}^{m} x_it_i)^{-a},
$$
and parameters $b$ and $c$ satisfy 
$\re(c_i)>\re(b_i)>0$ $(i=1,\dots,m)$.

Differential operators 
$$
x_i(1\!-\! x_i)\pa_i^2\!-\! x_i\sum_{1\le j\le m}^{j\ne i}x_j\pa_i\pa_j\!
+\! [c_i\!-\! (a\!+\! b_i\!+\! 1)x_i]\pa_i\!
-\! b_i\sum_{1\le j\le m}^{j\ne i}x_j\pa_j\!
-\! ab_i 
$$
for $i=1,\dots,m$
annihilate  the series $F_A(a,b,c;x)$, where 
$\displaystyle{\pa_i=\frac{\pa}{\pa x_i}}$. 
We define Lauricella's hypergeometric system $F_A(a,b,c)$  
by differential equations corresponding to these operators.

We define the local solution space $Sol(U)$ of 
the system $F_A(a,b,c)$ on a domain $U$ in $\C^m$ by
the $\C$-vector space 
$$\{F(x)\in \mathcal{O}(U)\mid P(x,\pa)\cdot F(x)=0\ \textrm{for any }
P(x,\pa)\in F_A(a,b,c)\},$$
where $\mathcal{O}(U)$ is the $\C$-algebra 
of single valued holomorphic functions on $U$.
The rank of $F_A(a,b,c)$ is defined by 
$\sup\limits_{U}\dim(Sol(U))$.
It is known that the rank of $F_A(a,b,c)$ is $2^m$
and  $2^m$ functions $F_A(a,b,c;x)$ and  $\pa_I F_A(a,b,c;x)$
 are linearly independent, where $I=\{i_1,\dots,i_r\}$ runs over the 
non-empty subsets of $\{1,\dots,m\}$ and 
$\pa_I=\pa_{i_1}\cdots\pa_{i_r}$.

If the rank of $F_A(a,b,c)$ is greater than 
$\dim(Sol(U_x))$ for any neighborhood $U_x$ of $x\in \C^m$ then  
$x$ is called a singular point of $F_A(a,b,c)$. 
The singular locus of $F_A(a,b,c)$ is defined as the set of such points.
It is also shown in \cite{MY} that the singular locus is 
$$\big(\bigcup_{v\in \Z_2^m} S_v\big)\bigcup\big(\bigcup_{i=1}^m S_0^i\big),$$
where 
$$\begin{array}{ll}
S_{v}=\{x\in \C^m\mid v\tr x=\sum_{i=1}^m v_ix_i=1\}, 
& v\in \F_2^m,\\[1mm]
S_{0}^i=\{x\in \C^m\mid x_i=0\}, 
&i=1,\dots,m,
\end{array}
$$
and we regard $S_{(0,\dots,0)}$ as the empty set.
We set
\begin{eqnarray*}
X&=&\C^m-\Big[\big(\bigcup_{v\in \Z_2^m} S_w\big)
\bigcup\big(\bigcup_{i=1}^m S_0^i\big)\Big],\\
S&=&(\P^1)^m-X=\big(\bigcup_{v\in \Z_2^m} S_w\big)
\bigcup\big(\bigcup_{i=1}^m S_0^i\big)\bigcup\big
(\bigcup_{i=1}^m S_\infty^i\big),
\end{eqnarray*}
where $S_{\infty}^i=\{x\in (\P^1)^m\mid x_i=\infty\}.$

We define a partial order  and a total order on $\F_2^m$.
\begin{definition}
\label{def:orders}
For $v=(v_1,\dots,v_m), w=(w_1,\dots,w_m)\in\F_2^m$,
\begin{itemize}
\item[$(i)$] 
$v\succeq w$ if and only if  $w_i=1 \Rightarrow v_i=1$. 
\item[$(ii)$]
$v\succ w$ if and only if $v\succeq w$ and  $v\ne w$.
\item[$(iii)$]
$v>w$ if and only if $v\ne w$ and they satisfy one of 
\begin{itemize}
\item[(1)] $|v|>|w|$, 
\item[(2)] $|v|=|w|$ and $v_i<w_i$, where $i$ is the minimum index 
satisfying $v_i\ne w_i$.
\end{itemize}
\end{itemize}
\end{definition}
It is easy to see that  
$$\begin{array}{c}
v\succ w \Rightarrow v>w,\\
(0,\dots,0)\prec e_{i} \prec e_{i}+e_{j} \prec e_{i}+e_{j}+e_{k}
\prec \cdots\prec(1,\dots,1),\\
(0,\dots,0)<e_1<e_2<\cdots<e_m<e_1+e_2<e_1+e_3<\cdots<(1,\dots,1),
\end{array}
$$
where $e_i$ is the $i$-th unit row vector, and $i,j,k$ are mutually different.
Note that the cardinality of the set 
$\{w\in \F_2^m\mid v\succeq w\}$ for a fixed $v\in \F_2^m$ is $2^{|v|}$,
where 
$$|v|=\sum_{i=1}^m v_i.$$
By the bijection 
$$
\F_2^m\ni v\mapsto I_v=\{i\in \{1,\dots,m\}\mid v_i=1\}\in 2^{\{1,\dots,m\}}
$$
between  $\F_2^m$  and 
the power set $2^{\{1,\dots,m\}}$ of $\{1,\dots,m\}$, 
the partial order $\succeq$ on $\F_2^m$ corresponds to 
the partial order $\supset$ on $2^{\{1,\dots,m\}}$.

We set
$$
\begin{array}{rll}
(\b_{0,1},\dots,\b_{0,m})&=&(b_1,\dots,b_m),\\
(\b_{1,1},\dots,\b_{1,m})&=&(c_1-1-b_1,\dots,c_m-1-b_m),\\
\g_v&=&a-v \tr c+|v|\quad (v\in \F_2^m).
\end{array}
$$
We regard theses parameters as indeterminates. 
Throughout this paper, we assume that 
\begin{equation}
\label{eq:non-integral}
\b_{0,i},\ \b_{1,i},\ \g_v \notin \ \Z  
\end{equation}
for any $i\in \{1,\dots,m\}$ and $v\in \F_2^m$, 
when we assign complex values to them.

\section{Twisted cohomology group}
In this section, we regard vector spaces as defined over 
the rational function field $\C(\a)=\C(a,b_1,\dots,b_m,c_1,\dots,c_m)$
when we do not specify a field.
We denote the vector space of rational $k$-forms on $\C^m$ 
with poles only along $S$ by $\W^k_X(*S)$. Note 
that $\W^0_X(*S)$ admits the structure of an algebra over $\C(\a)$.
We set
\begin{eqnarray*}
\cX&=&\big\{(t,x)\in \C^m\times X\big|\big(1-\sum_{i=1}^mt_i x_i\big)
\prod_{i=1}^{m}t_i(1-t_i)\ne0\big\}
\subset (\P^1)^{2m},\\ 
\wt S&=&(\P^1)^{2m}-\cX.
\end{eqnarray*}
We define projections 
$$\pr_T:\wt X\ni (t,x)\mapsto t\in\C^m,\quad 
\pr_X:\wt X\ni (t,x)\mapsto x\in X.$$
Note that 
$$\pr_T(\pr_X^{-1}(x))=\{t\in \C^m\mid 
(1-\sum_{i=1}^mt_i x_i\big)
\prod_{i=1}^{m}t_i(1-t_i)\ne0\}=\C^m_x$$
for any fixed $x\in X$.

Let $\W^k_{\wt X}(*\wt S)$ be the vector space of rational $k$-forms on $\cX$ 
with poles only along $\wt S$ and 
$\W^{p,q}_{\wt X}(*\wt S)$ be the subspace of $\W^{p+q}_{\wt X}(*\wt S)$
consisting elements which are $p$-forms with respect to the variables 
$t_1,\dots,t_m$. 

We set
\begin{eqnarray*}
\w_T&=&\sum_{i=1}^{m}\w_{T_i}dt_i\in \W^{1,0}_{\wt X}(*\wt S),\quad 
\w_{T_i}=\frac{\b_{0,i}}{t_i}+\frac{\b_{1,i}}{t_i-1}
+\frac{a x_i }{1-t\tr x}
,\\
\w_X&=&\sum_{i=1}^{m}\w_{X_i}dx_i\in \W^{0,1}_{\wt X}(*\wt S),\quad 
\w_{X_i}=\frac{a  t_i}{1-t\tr x},\\
\w&=&\w_T+\w_X\in \W^{1}_{\wt X}(*\wt S).
\end{eqnarray*}
We define  a twisted exterior derivation on $\wt X$ by 
$$\na_T=d_T+\w_T\wedge,$$
where $d_T$ is the exterior derivation with respect to the variable $t$, i.e.,
$$d_Tf(t,x)=\sum_{i=1}^m\frac{\pa f}{\pa t_i}(t,x)dt_i.$$
We define an $\W_X^{0}(*S)$-module by 
$$
\cH^m(\na_T)=\W_{\wt X}^{m,0}(*\wt S)\big/\na_T(\W_{\wt X}^{m-1,0}(*\wt S)).$$
It admits the structure of a vector bundle over $X$.  
We define two sets $\{\f_v\}_{v\in \F_2^m}$ and 
$\{\psi_v\}_{v\in \F_2^m}$
of $2^m$ elements of $\W_{\wt X}^{m,0}(*\wt S)$ as
\begin{equation}
\label{eq:frame}
\f_{v}=\frac{dT}{\prod_{i=1}^m(t_i-v_i)},\quad 
\psi_v=
\frac{(1-v\tr x)dT}{(1-t\tr x)\prod\limits_{1\le i\le m}(t_i-v_i)}
\end{equation}
where 
$v=(v_1,\dots,v_m)\in \F_2^m$.
To express $\psi_v$  as a linear combination of $\f_v$'s, we give 
some Lemmas.

\begin{lemma}
\label{lem:lin-comb}
We have 
$$\psi_v=\frac{1-v\tr x}{1-t\tr x}\f_v=\f_v+\sum_{j=1}^m\frac{x_jdT}
{(1-t\tr x)\prod_{1\le i\le m}^{i\ne j}(t_i-v_i)}.
$$
\end{lemma}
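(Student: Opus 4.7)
The first equality is immediate from the definition \eqref{eq:frame}: pulling the scalar factor $(1 - v\tr x)/(1 - t\tr x)$ out of $\psi_v$ leaves exactly $dT/\prod_{i=1}^m (t_i - v_i) = \f_v$, so the content is purely notational.

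The second equality rests on the algebraic identity
$$
1 - v\tr x \;=\; (1 - t\tr x) + (t - v)\tr x \;=\; (1 - t\tr x) + \sum_{j=1}^m (t_j - v_j)\, x_j,
$$
which is just the observation that $v\tr x - t\tr x = -\sum_j (t_j - v_j)x_j$. Dividing through by $1 - t\tr x$ and multiplying by $\f_v$ gives
$$
\frac{1 - v\tr x}{1 - t\tr x}\,\f_v \;=\; \f_v \;+\; \sum_{j=1}^m \frac{(t_j - v_j)\, x_j}{1 - t\tr x}\,\f_v.
$$

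Finally, I would invoke the cancellation $(t_j - v_j)\,\f_v = dT/\prod_{i \neq j}(t_i - v_i)$, which just removes the $j$-th factor from the denominator of $\f_v$. Substituting this into each summand produces the right-hand side of the stated identity, and combining with the first equality gives the lemma.

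There is no real obstacle here: the lemma is a direct algebraic manipulation, and the only point to watch is that the decomposition of $1 - v\tr x$ is chosen precisely so that each $(t_j - v_j)$ cancels one factor in the denominator of $\f_v$, producing the explicit sum in the statement.
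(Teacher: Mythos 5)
Your proof is correct and is exactly the calculation the paper leaves implicit (its proof reads only ``A straightforward calculation implies this lemma''): the decomposition $1-v\tr x=(1-t\tr x)+\sum_j(t_j-v_j)x_j$ followed by the cancellation of $(t_j-v_j)$ against the $j$-th factor of the denominator of $\f_v$ is the intended argument. Nothing further is needed.
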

\begin{proof}
A straightforward calculation implies this lemma. 
\end{proof}

\begin{lemma}
\label{lem:cohomolog}
We have
$$
\frac{ax_jdT}{(1-t\tr x)\prod_{1\le i\le m}^{i\ne j}(t_i-v_i)}
=\left\{
\begin{array}{lll}
-\b_{0,j}\f_v-\b_{1,j}\f_{\sigma_j\cdot v}& \textrm{if}& v_j=0,\\[3mm]
-\b_{0,j}\f_{\sigma_j\cdot v}-\b_{1,j}\f_v& \textrm{if}& v_j=1,\\
\end{array}
\right.
$$
as elements of $\cH^m(\na_T)$, 
where 
$$\sigma_j:\F_2^m\ni v\mapsto \sigma_j\cdot v\in \F_2^m,\quad 
\sigma_j\cdot v\equiv v+e_j\bmod 2.$$ 
\end{lemma}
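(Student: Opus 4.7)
The plan is to exhibit the target form as $-\nabla_T \eta$ plus cohomologous terms, where $\eta$ is a carefully chosen $(m-1,0)$-form on $\wt X$. Concretely, I would take
$$
\eta = \frac{(-1)^{j-1}}{\prod_{1\le i\le m}^{i\ne j}(t_i - v_i)}\, dt_1\wedge\cdots\wedge\widehat{dt_j}\wedge\cdots\wedge dt_m \;\in\; \W^{m-1,0}_{\wt X}(*\wt S),
$$
and compute $\nabla_T\eta = d_T\eta + \w_T\wedge\eta$.

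The first term $d_T\eta$ vanishes: the coefficient of $\widehat{dT_j}$ is independent of $t_j$, so $\partial_{t_j}$ kills it, while for $k\ne j$ the wedge $dt_k\wedge\widehat{dT_j}=0$ because $dt_k$ already occurs. For the second term, $\w_T\wedge\eta = \sum_i \w_{T_i}\,dt_i\wedge\eta$, and again only $i=j$ survives, giving $\w_{T_j}\,dt_j\wedge\eta$. Using $dt_j\wedge\widehat{dT_j} = (-1)^{j-1}\,dT$, the sign $(-1)^{j-1}$ cancels and we obtain
$$
\nabla_T\eta \;=\; \frac{\w_{T_j}\,dT}{\prod_{1\le i\le m}^{i\ne j}(t_i - v_i)}.
$$

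Now I substitute $\w_{T_j} = \dfrac{\b_{0,j}}{t_j} + \dfrac{\b_{1,j}}{t_j-1} + \dfrac{ax_j}{1-t\tr x}$ and identify each resulting summand as an element of the frame $\{\f_w\}_{w\in\F_2^m}$. The term with $\tfrac{1}{t_j}$ produces $\f_w$ with $w_j=0$ and $w_i=v_i$ for $i\ne j$, while the term with $\tfrac{1}{t_j-1}$ produces $\f_w$ with $w_j=1$ and $w_i=v_i$ for $i\ne j$. The identification with $\f_v$ or $\f_{\sigma_j\cdot v}$ therefore depends on the value of $v_j$: if $v_j=0$ the first summand is $\b_{0,j}\f_v$ and the second is $\b_{1,j}\f_{\sigma_j\cdot v}$; if $v_j=1$ the roles are swapped. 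The third summand is precisely the form we want to evaluate.

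Since $\nabla_T\eta\equiv 0$ in $\cH^m(\na_T)$, solving for that third summand yields the two cases of the lemma. The routine care needed is only in the sign coming from $dt_j\wedge\widehat{dT_j}$ and in the case split on $v_j$; no real obstacle arises, since the construction of $\eta$ is forced by the requirement that only the $j$-th component of $\w_T$ contribute.
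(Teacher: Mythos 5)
Your proposal is correct and is essentially identical to the paper's proof: the form $\eta$ you construct is exactly the paper's $\check{\f}_v^j$, and the computation $\na_T\eta=\w_{T_j}\,dt_j\wedge\eta$ followed by the case split on $v_j$ matches the argument given there. Nothing further is needed.
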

\begin{proof}
Put 
$$\check{\f}_v^j=(-1)^{j-1}\frac{dt_1\wedge\cdots\wedge dt_{j-1}\wedge
dt_{j+1}\wedge\cdots\wedge dt_{m}}{\prod_{1\le i\le m}^{i\ne j}(t_i-v_i)}
\in \W_{\wt X}^{m-1,0}(*\wt S)$$
for $1\le j\le m$ and $v\in \F_2^m$. 
Since 
\begin{eqnarray*}
& &dt_i\wedge (\frac{\pa}{\pa t_i} \check{\f}_v^j)=0\quad  (1\le i\le m),
\\
& &dt_i\wedge (\w_{T_i} \check{\f}_v^j)=0\quad  (1\le i\le m,i\ne j),
\end{eqnarray*}
we have 
\begin{eqnarray*}
& &\na_T(\check{\f}_v^j)=\w_{T_j}dt_j\wedge \check{\f}_v^j\\
&=&
\frac{\b_{0,j}dT}{t_j\prod\limits_{1\le i\le m}^{i\ne j}(t_i\!-\! v_i)}+
\frac{\b_{1,j}dT}{(t_j\!-\!1)\prod\limits_{1\le i\le m}^{i\ne j}(t_i\!-\! v_i)}
+\frac{ax_jdT}
{(1\!-\! t\tr x)\prod\limits_{1\le i\le m}^{i\ne j}(t_i\!-\! v_i)}.
\end{eqnarray*}
If $v_j=0$ then the first and second terms of the last line are 
$\b_{0,j}\f_v$ and $\b_{1,j}\f_{\sigma_j\cdot v}$, respectively;  
if $v_j=1$ then they are $\b_{0,j}\f_{\sigma_j\cdot v}$ and $\b_{1,j}\f_v$. 
Note that  $\na_T(\check{\f}_v^j)=0$ as an element of $\cH^m(\na_T)$. 
\end{proof}

\begin{proposition}
\label{prop:simplex}
For any $v\in \F_2^m$, the form 
$$
a\psi_v=a\frac{1-v\tr x}{1-t \tr x}\f_v\in \W_{\wt X}^{m,0}(*\wt S)
$$
is equal to 
$$\Big[a-\sum_{j=1}^m\beta_{v_j,j}\Big]\f_v
-\sum_{j=1}^m\beta_{1-v_j,j}\f_{\sigma_j\cdot v}
$$
as an element of $\cH^m(\na_T)$.
\end{proposition}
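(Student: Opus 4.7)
The plan is to combine the two preceding lemmas directly. By Lemma \ref{lem:lin-comb}, multiplying by $a$ and isolating the $a\f_v$ term, I write
\begin{equation*}
a\psi_v = a\f_v + \sum_{j=1}^m \frac{a x_j\, dT}{(1-t\tr x)\prod_{1\le i\le m}^{i\ne j}(t_i-v_i)}
\end{equation*}
as an element of $\W_{\wt X}^{m,0}(*\wt S)$, so in particular as an element of $\cH^m(\na_T)$. Then I plug each of the $m$ summands into Lemma \ref{lem:cohomolog}.

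The next step is to unify the two cases of Lemma \ref{lem:cohomolog} into a single formula. I would observe that if $v_j=0$ the $j$-th summand reduces modulo $\na_T$ to $-\b_{0,j}\f_v-\b_{1,j}\f_{\sigma_j\cdot v}$, while if $v_j=1$ it reduces to $-\b_{1,j}\f_v-\b_{0,j}\f_{\sigma_j\cdot v}$. Using the index substitutions $\b_{0,j}\leftrightarrow\b_{v_j,j}$ and $\b_{1,j}\leftrightarrow\b_{1-v_j,j}$, both cases become uniformly
\begin{equation*}
\frac{a x_j\, dT}{(1-t\tr x)\prod_{1\le i\le m}^{i\ne j}(t_i-v_i)} \equiv -\b_{v_j,j}\f_v - \b_{1-v_j,j}\f_{\sigma_j\cdot v} \pmod{\na_T(\W_{\wt X}^{m-1,0}(*\wt S))}.
\end{equation*}

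Substituting this into the displayed expansion of $a\psi_v$ and collecting the coefficient of $\f_v$ yields $a-\sum_{j=1}^m\b_{v_j,j}$, while the other terms contribute $-\sum_{j=1}^m\b_{1-v_j,j}\f_{\sigma_j\cdot v}$. This is exactly the asserted identity in $\cH^m(\na_T)$.

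This argument is essentially bookkeeping; no step is a genuine obstacle. The one point that requires a little care is the case analysis in combining Lemma \ref{lem:cohomolog} into a single uniform formula indexed by $\b_{v_j,j}$ and $\b_{1-v_j,j}$. One should also note that the forms $\f_{\sigma_j\cdot v}$ appearing in the sum are well defined members of the spanning family $\{\f_w\}_{w\in\F_2^m}$, so no further cohomological reduction beyond Lemma \ref{lem:cohomolog} is needed.
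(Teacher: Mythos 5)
Your proof is correct and follows essentially the same route as the paper: both multiply the identity of Lemma \ref{lem:lin-comb} by $a$, rewrite each summand via Lemma \ref{lem:cohomolog}, and collect coefficients of $\f_v$ and $\f_{\sigma_j\cdot v}$. The only cosmetic difference is that you unify the two cases of Lemma \ref{lem:cohomolog} into the single formula $-\b_{v_j,j}\f_v-\b_{1-v_j,j}\f_{\sigma_j\cdot v}$ before summing, whereas the paper splits the sum over $v_j=0$ and $v_j=1$ and then observes the same identifications.
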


\begin{proof}
Rewrite the right hand side of the identity in Lemma \ref{lem:lin-comb} 
by Lemma \ref{lem:cohomolog}. Then we have 
$$a\psi_v=a\f_v-
\sum_{1\le j\le m}^{v_j=0}
(\b_{0,j}\f_v+\b_{1,j}\f_{\sigma_j\cdot v})
-\sum_{1\le j\le m}^{v_j=1}
(\b_{0,j}\f_{\sigma_j\cdot v}+\b_{1,j}\f_v).$$
Note that 
$$\sum_{1\le j\le m}^{v_j=0}\b_{0,j}\f_v+
\sum_{1\le j\le m}^{v_j=1}\b_{1,j}\f_v=
\Big(\sum_{j=1}^m\b_{v_j,j}\Big)\f_v,$$
and that 
$$\b_{1-v_j,j}\f_{\sigma_j\cdot v}=
\left\{\begin{array}{lll}
\b_{1,j}\f_{\sigma_j\cdot v} & \textrm{if} &v_j=0,\\[2mm]
\b_{0,j}\f_{\sigma_j\cdot v} & \textrm{if} &v_j=1,
\end{array}
\right.
$$
for $1\le j\le m$.
\end{proof}

We consider the structure of the fiber of $\cH^m(\na_T)$ at $x$.
Let $\W_{\C_x^m}^p(*x)$ be the pull-back of $\W_{\wt X}^{p,0}(*\wt S)$
under the map $\imath_x:\C_x^m\to \wt X$ for a fixed $x\in X$.
Each fiber of $\cH^m(\na_T)$ at $x$ is isomorphic to 
the rational twisted cohomology group
$$H^m(\W_{\C_x^m}^\bu(*x),\na_T)=
\W_{\C_x^m}^m(*x)/\na(\W_{\C_x^m}^{m-1}(*x))$$
on $\C_x^m$ with respect to $\na_T$ induced from the map $\imath_x$.
We denote the pull-back of $\f_v$ under the map 
$\imath_x$  by $\f_{x,v}$. 

\begin{fact}[\cite{AoKi}]
\label{fact:TCH}
\begin{itemize}
\item[$\mathrm{(i)}$]
The space $H^m(\W_{\C_x^m}^\bu(*x),\na_T)$
is $2^m$-dimensional and it is spanned by 
the classes of $\f_{x,v}$ for any $v\in \F_2^m$. 

\item[$\mathrm{(ii)}$]
There is a canonical isomorphism $\jmath_x$ from 
$H^m(\W_{\C_x^m}^\bu(*x),\na_T)$ to 
$$H^m(\cE_c^\bu(x),\na_T)=\ker(\na_T:\cE_c^{m}(x)\to \cE_c^{m+1}(x))
/\na_T(\cE_c^{m-1}(x)),$$
where $\cE_c^k(x)$ is the vector space of smooth $k$-forms 
with compact support in $\C_x^m$.
\end{itemize}
\end{fact}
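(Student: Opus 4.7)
The plan is to view Fact \ref{fact:TCH} as an instance of the Aomoto--Esnault--Schechtman--Viehweg theory of twisted de~Rham cohomology for complements of hyperplane arrangements, with the non-integrality condition (\ref{eq:non-integral}) playing the role of the non-resonance hypothesis.

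First I would fix $x\in X$ and interpret $\C_x^m$ as the complement in $\C^m$ of the affine arrangement $\mathcal{A}_x$ consisting of the $2m+1$ hyperplanes $\{t_i=0\}$, $\{t_i=1\}$ for $i=1,\dots,m$, together with $\{1-t\tr x=0\}$. The connection form $\w_T$ is then logarithmic along each component of $\mathcal{A}_x$ with residues $\b_{0,i}$, $\b_{1,i}$ and $-a$, respectively. After passing to a smooth compactification of $\C_x^m$ (concretely, by blowing up $(\P^1)^m$ along the intersections of the closure of $\{1-t\tr x=0\}$ with the coordinate strata at infinity), I expect the residue of $\w_T$ along the exceptional divisor meeting the stratum indexed by $I_v\subset\{1,\dots,m\}$ to equal $\g_v=a-v\tr c+|v|$. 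Thus the condition (\ref{eq:non-integral}) translates precisely into: no residue of $\w_T$ along any irreducible boundary component of the compactification lies in $\Z$.

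Under this non-resonance the Esnault--Schechtman--Viehweg vanishing theorem yields $H^k(\W_{\C_x^m}^\bu(*x),\na_T)=0$ for $k\ne m$, and the top cohomology is computed by the logarithmic subcomplex. A direct Euler-characteristic computation (either inductive in $m$ or via the Orlik--Solomon formula) gives $|\chi(\C_x^m)|=2^m$, whence $\dim H^m=2^m$. To show that the $\f_{x,v}$ span, I would argue by induction on pole order: any rational top form with poles on $\mathcal{A}_x$ is reduced modulo $\na_T$-exact forms to a combination of the $\f_{x,v}$'s, the basic reduction step being exactly the identity in Lemma \ref{lem:cohomolog} performed at each face. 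Counting dimensions then forces the $\f_{x,v}$ to be a basis, proving~(i).

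For (ii) I would construct $\jmath_x$ by the standard regularization procedure: given $\phi\in\W_{\C_x^m}^m(*x)$, pick a smooth cut-off $\chi_\e$ supported in $\C_x^m$ that equals $1$ outside an $\e$-tubular neighborhood of $\mathcal{A}_x$ and of the divisor at infinity; then $\chi_\e\phi$ is compactly supported but not $\na_T$-closed. Near each face of $\mathcal{A}_x$ the non-resonance allows one to solve $\na_T\rho_\e=d\chi_\e\wedge\phi$ locally, and gluing the local primitives via a partition of unity produces a compactly supported smooth representative in $\cE_c^m(x)$. The induced map descends to a well-defined $\C(\a)$-linear $\jmath_x$, and bijectivity is verified by running the same regularization procedure in the opposite direction. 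The main obstacle I expect is the boundary analysis after blow-up: one has to verify that \emph{every} residue of $\w_T$ along the exceptional divisors coincides with some $\g_v$, so that (\ref{eq:non-integral}) really supplies all the non-resonance needed, both for the vanishing theorem in~(i) and for the local solvability in~(ii). Once this bookkeeping is settled, the remainder is a routine application of Aomoto--Kita.
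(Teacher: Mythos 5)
The paper does not actually prove this statement: it is imported as a Fact from \cite{AoKi}, so there is no internal argument to compare yours against. Your plan is, in outline, the standard twisted de~Rham argument underlying that reference --- non-resonance forces vanishing outside the top degree, an Euler-characteristic count gives $\dim H^m=|\chi(\C_x^m)|=2^m$, pole-order reduction (essentially the identity of Lemma \ref{lem:cohomolog} applied at every face) shows the $\f_{x,v}$ span, and regularization furnishes the isomorphism $\jmath_x$ with compactly supported cohomology --- and it is correct as far as it goes. Two details should be made explicit. First, the count $|\chi(\C_x^m)|=2^m$ uses that the hyperplane $1-t\tr x=0$ sits in the right combinatorial position relative to the coordinate arrangement $\{t_i=0\}\cup\{t_i=1\}$, in particular that it avoids the $2^m$ cube vertices $t=v$, $v\in\F_2^m$; this is precisely the condition $x\notin S_v$ for all $v$, i.e.\ $x\in X$, so the hypothesis is available but must be invoked --- on the singular locus the combinatorics degenerate and the dimension drops. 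Second, the residue bookkeeping you flag as the main obstacle does close up: for $m=1$ the residue of $\w_T$ at $t_1=\infty$ is $-\b_{0,1}-\b_{1,1}+a=\g_{e_1}$, and in general the residues along the boundary strata at infinity indexed by $I_v$ are exactly the $\g_v$, so (\ref{eq:non-integral}) supplies all the non-resonance needed; you should, however, check that the version of the vanishing theorem you invoke (residues not in $\Z$, as opposed to not in $\Z_{\ge 0}$ along dense edges) matches the hypothesis you have. With that settled, your plan reproduces the content of the cited source.
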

By Fact \ref{fact:TCH}, we have the following.
\begin{proposition}
\label{prop:frame}
The $\W_X^0(*S)$-module $\cH^m(\na_T)$ is of rank $2^m$.
The classes of 
$\f_{v}$ $(v\in \F_2^m)$ 
in $\W_{\wt X}^{m,0}(*\wt S)$ form a frame of the vector bundle 
$\cH^m(\na_T)$ over $X$.
\end{proposition}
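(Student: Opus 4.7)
The plan is to upgrade the fiberwise statement of Fact \ref{fact:TCH}(i) to the asserted global statement about the $\W_X^0(*S)$-module $\cH^m(\na_T)$. The key fact is that, by construction of $\cH^m(\na_T)$ as the cohomology of the relative de Rham complex $(\W_{\wt X}^{\bu,0}(*\wt S),\na_T)$, the pullback $\imath_x^*:\W_{\wt X}^{m,0}(*\wt S)\to\W_{\C_x^m}^m(*x)$ descends to a surjection onto the fiber, and the paragraph preceding Fact \ref{fact:TCH} identifies this fiber with $H^m(\W_{\C_x^m}^\bu(*x),\na_T)$. Under this identification, $\imath_x^*$ sends the class of $\f_v$ to the class of $\f_{x,v}$ by the very definition of $\f_{x,v}$.

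First I would record that, for every $x\in X$, Fact \ref{fact:TCH}(i) gives a $\C$-basis $\{[\f_{x,v}]\}_{v\in\F_2^m}$ of the fiber $\cH^m(\na_T)|_x$, which has dimension $2^m$. Since the restriction of the sections $\f_v\in \cH^m(\na_T)$ to the fiber at $x$ gives precisely this basis, the $2^m$ sections $\{\f_v\}_{v\in \F_2^m}$ trivialize $\cH^m(\na_T)$ fiberwise on $X$, i.e.\ they form a frame of the vector bundle.

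Next I would deduce that these sections generate $\cH^m(\na_T)$ as a $\W_X^0(*S)$-module. Given any $[\eta]\in \cH^m(\na_T)$, at each point $x\in X$ we may write
\[
[\eta]|_x=\sum_{v\in \F_2^m} c_v(x)\,[\f_{x,v}]
\]
with uniquely determined coefficients $c_v(x)\in \C$, and because the whole setup is algebraic in $x$ (the form $\eta$, the frame elements $\f_v$, and the reduction modulo $\na_T$ all depend rationally on $x$, with possible poles only along $S$), the functions $c_v$ lie in $\W_X^0(*S)$. This yields the identity $[\eta]=\sum_v c_v\,[\f_v]$ in $\cH^m(\na_T)$, showing that the $\f_v$ span. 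Linear independence over $\W_X^0(*S)$ is immediate from fiberwise independence: a relation $\sum_v c_v\,[\f_v]=0$ restricts, at every $x\in X$, to $\sum_v c_v(x)\,[\f_{x,v}]=0$, hence $c_v(x)=0$ on all of $X$ and therefore $c_v=0$ in $\W_X^0(*S)$. This gives the rank $2^m$ assertion and the frame assertion simultaneously.

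The routine parts are the two linear-algebra statements at fibers; the one step that deserves the most care, and which I would treat as the main technical point, is justifying that the fiberwise coefficients $c_v(x)$ assemble into a rational function on $X$ with poles only along $S$. Equivalently, one must check that the formation of the cohomology $\cH^m(\na_T)$ commutes with base change to the generic point and to each closed point $x\in X$, so that Fact \ref{fact:TCH}(i) applies uniformly and the transition matrices between different local expressions are rational on $X$. Once this coherence is in place, the passage from fiberwise basis to $\W_X^0(*S)$-module frame, and hence the statement that $\cH^m(\na_T)$ is a rank $2^m$ vector bundle over $X$ with frame $\{\f_v\}_{v\in\F_2^m}$, follows directly.
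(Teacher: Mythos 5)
Your argument is correct and follows essentially the same route as the paper, which simply deduces the proposition from Fact \ref{fact:TCH}(i) with no further elaboration; you have merely spelled out the standard passage from a fiberwise basis to a frame of the $\W_X^0(*S)$-module. The technical point you flag (rational dependence of the coefficients on $x$) is the right one to worry about, and the paper leaves it implicit as well.
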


Set
$$
\cH^m(\na_T^\vee)=
\W_{\wt X}^{m,0}(*\wt S)\big/\na^\vee(\W_{\wt X}^{m-1,0}(*\wt S)),$$
where $\na_T^\vee=d_T-\w_T\wedge$.
This $\W_X^0(*S)$-module can be regarded as vector bundles over $X$.
The classes of $\f_v$ $(v\in \F_2^m)$ also form a frame of 
this vector bundle.
Each fiber of $\cH^m(\na^\vee)$ at $x$ is 
the rational twisted cohomology group 
$H^m(\W_{\C_x^m}^\bu(*x),\na_T^\vee)$ 
on $\C_x^m$ defined by the coboundary $\na_T^\vee$ instead of $\na_T$. 
We define the intersection form between 
$H^m(\W_{\C_x^m}^\bu(*x),\na_T)$
and 
$H^m(\W_{\C_x^m}^\bu(*x),\na_T^\vee)$
by 
$$\cI(\f_x,\f_x')=\int_{\C_x^m}\jmath_x(\f_x)\wedge \f_x'\in \C(\a),$$
where $\f_x,\f_x'\in \W_{\C_x^m}^{m}(*x)$,  and $\jmath_x$ is given in 
Fact \ref{fact:TCH}. 
This integral converges since $\jmath_x(\f_x)$ is a smooth $m$-from 
on $\C_x^m$ with compact support. 
It is bilinear over $\C(\a)$.

For $w=(w_1,\dots,w_m)\in \F_2^m$ with $|w|=r$, we have 
a sequence of $w^{(r)}$, $w^{(r-1)},\dots,$ $w^{(1)}\in \F_2^m$ 
such that $|w^{(j)}|=j$ and 
$$w=w^{(r)}\succ w^{(r-1)} \succ w^{(r-2)}\succ \cdots \succ w^{(1)}\succ 
(0,\dots,0).$$
Let $\mathfrak{S}_w $ be the set of such sequences 
$(w,w^{(r-1)},\dots,w^{(1)})$ for given $w\in \F_2^m$. 
Note that its cardinality is $r!$. 
We put 
$$A_w=\sum_{(w,w^{(r-1)},\dots,w^{(1)})\in \mathfrak{S}_w} 
\frac{1}{\prod_{j=1}^{r} \g_{w^{(j)}}}.$$
For example, 
\begin{eqnarray*}
A_{(1,1)}&=&\frac{1}{\g_{(1,1)}}\times\Big(\frac{1}{\g_{(1,0)}}+
\frac{1}{\g_{(0,1)}}\Big)
=
\frac{1}{a-c_1-c_2+2}\times\Big(\frac{1}{a-c_1+1}+\frac{1}{a-c_2+1}\Big),\\
A_{(1,1,1)}&=&
\frac{1}{\g_{(1,1,1)}\g_{(1,1,0)}}\Big(\frac{1}{\g_{(1,0,0)}}+
\frac{1}{\g_{(0,1,0)}}\Big)
+
\frac{1}{\g_{(1,1,1)}\g_{(1,0,1)}}\Big(\frac{1}{\g_{(1,0,0)}}
+\frac{1}{\g_{(0,0,1)}}\Big)\\
& &+\frac{1}{\g_{(1,1,1)}\g_{(0,1,1)}}\Big(\frac{1}{\g_{(0,1,0)}}
+\frac{1}{\g_{(0,0,1)}}\Big)\\
&=&
\frac{1}{a\!-\!c_1\!-\!c_2\!-\!c_3\!+\!3}
\times 
\Big[\frac{1}{a\!-\!c_1\!-\!c_2\!+\!2}(\frac{1}{a\!-\!c_1\!+\!1}\!+\!\frac{1}{a\!-\!c_2\!+\!1})\\ 
& &\!+\!\frac{1}{a\!-\!c_1\!-\!c_3\!+\!2}(\frac{1}{a\!-\!c_1\!+\!1}\!+\!\frac{1}{a\!-\!c_3\!+\!1})
\ \!+\!\frac{1}{a\!-\!c_2\!-\!c_3\!+\!2}(\frac{1}{a\!-\!c_2\!+\!1}\!+\!\frac{1}{a\!-\!c_3\!+\!1})\Big].
\end{eqnarray*}

\begin{proposition}
\label{prop:Int-cohom}
We have 
\begin{eqnarray*}
\cI(\f_{x,v},\f_{x,v'})\!&=&\!(2\pi\sqrt{-1})^m
\left[\sum_{w\in \F_2^m} A_w\prod_{1\le i\le m}^{w_i=0}
\frac{\d(v_i,v_i')}{\b_{v_i,i}}\right],\\
\cI(\f_{x,v},\psi_{x,v'})\!&=&\!
(2\pi\sqrt{-1})^m\left\{
\begin{array}{cl}
\displaystyle{\frac{1}{\Pi\b_v}}
,
&\textrm{if } v=v',\\[6mm]
0,& \textrm{otherwise},
\end{array}
\right.\\
\cI(\psi_{x,v},\psi_{x,v'})\!&=&\!
(2\pi\sqrt{-1})^m\left\{
\begin{array}{cl}
\displaystyle{\frac{a-\Sigma\b_v}{a\Pi\b_v}}
,
&\textrm{if } v=v',\\[6mm]
\displaystyle{\frac{-1}{a\prod_{1\le i\le m}^{v_i=v'_i}\b_{v_i,i}}},
&\textrm{if } \#(v\cap v')=m\!-\!1,\\[6mm]
0,& \textrm{otherwise},
\end{array}
\right.
\end{eqnarray*}
where $v=(v_1,\dots,v_m),\ v=(v_1',\dots,v_m')\in \F_2^m$, $\d$ denotes 
Kronecker's symbol, 
$$\Sigma\b_v=\sum_{i=1}^m\b_{v_i,i},\quad \Pi\b_v=\prod_{i=1}^m\b_{v_i,i},
$$
and we regard 
$$\prod_{1\le i\le m}^{w_i=0}
\frac{\d(v_i,v_i')}{\b_{v_i,i}}=1$$ 
for $w=(1,\dots,1)$. 
The matrix 
$$C=\frac{1}{(2\pi\sqrt{-1})^m}\cI(\f_{x,v},\f_{x,v'})_{v,v'\in \F_2^m}$$ 
satisfies 
$$\det(C)=
\frac{a^{2^m}}
{\big(\prod_{w\in \F_2^m} \g_w\big)
\big(\prod_{i=1}^m(\b_{0,i}\b_{1,i})^{2^{m-1}}\big)},$$
where we array $v,v'\in \F_2^m$ by the total order in 
Definition \ref{def:orders}.
When we assign the parameters to complex values 
under the assumption (\ref{eq:non-integral}), 
each intersection number is well-defined and $\det(C)\ne 0$;
the matrix $C$ is invertible.
\end{proposition}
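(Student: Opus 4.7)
The plan is to apply the Cho--Matsumoto residue formula for twisted intersections of logarithmic top-forms on a smooth compactification of $\C^m_x$ with simple normal crossings (SNC) boundary. On such a compactification the pairing reduces to
$$\cI(\f,\f')=(2\pi\i)^m\sum_p\frac{\mathrm{Res}_p(\f)\,\mathrm{Res}_p(\f')}{\prod_i\mu^{(p)}_i},$$
summed over the $0$-strata (vertices) $p$ of the arrangement, with $\mu^{(p)}_i$ the exponent of $\w_T$ along the $i$-th divisor through $p$. Reading off $\w_T$, the exponents are $\b_{0,i}$, $\b_{1,i}$, and $-a$ along $\{t_i=0\}$, $\{t_i=1\}$, and $\{1-t\tr x=0\}$ respectively; a short power-counting computation in the chart $s_i=1/t_i$ shows that the exponent along $\{t_i=\infty\}\subset(\P^1)^m$ is $\g_{e_i}$, and more generally the exceptional divisor produced by blowing up a corner $\{t_i=\infty:i\in I_w\}$ of codimension $|w|$ carries exponent $\g_w$.

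I would first handle the cases involving $\psi$, which are simpler because $\psi_v$ is regular along each boundary divisor $\{t_i=\infty\}$: the zero of $1/(1-t\tr x)$ there kills the would-be pole of $dt_i/(t_i-v_i)$, so only vertices in the affine chart contribute. For $\cI(\f_{x,v},\psi_{x,v'})$ the only shared vertex is the finite point $t=v=v'$, giving $\d_{vv'}/\Pi\b_v$ after a one-line residue computation. For $\cI(\psi_{x,v},\psi_{x,v'})$, in addition to $t=v$ (appearing only when $v=v'$), each point $p^v_j$ defined by $t_i=v_i$ ($i\ne j$) and $1-t\tr x=0$ is a vertex; working in local coordinates $\tau=1-t\tr x$ and $s_i=t_i-v_i$ ($i\ne j$), one finds that $\psi_v$ has residue $(-1)^j$ at $p^v_j$, and $\psi_{v'}$ shares this vertex with the same residue precisely when $v$ and $v'$ agree outside position $j$. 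Summing these contributions with the exponents $-a$ along $\tau=0$ and $\b_{v_i,i}$ along $s_i=0$ recovers the stated formulas.

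Next I would treat the decisive case $\cI(\f_{x,v},\f_{x,v'})$, which requires iterated blow-ups at the corners at infinity of $(\P^1)^m$ to produce an SNC compactification. A vertex at which both $\f_v$ and $\f_{v'}$ have nonzero residue is specified by a subset $I_w\subset\{1,\dots,m\}$, with $t_i=\infty$ for $i\in I_w$ and $t_i=v_i$ for $i\notin I_w$; the residue product vanishes unless $v_i=v'_i$ for every $i\notin I_w$, producing the Kronecker-delta factor $\prod_{i:w_i=0}\d(v_i,v'_i)/\b_{v_i,i}$. Different orderings of blow-ups at the codimension-$|w|$ corner yield different vertices on the resolution, each parametrised by a maximal chain $0\prec w^{(1)}\prec\dots\prec w^{(|w|)}=w$ in the Boolean lattice and contributing $\prod_{j=1}^{|w|}1/\g_{w^{(j)}}$ to the denominator (the exponents on the successively introduced exceptional divisors). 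Summing over chains gives $A_w$, and summing over $w\in\F_2^m$ yields the full formula. This step is the main obstacle: verifying that the iterated blow-up really is SNC and that the exponent on each exceptional divisor is $\g_w$ requires careful power-counting generalizing the computation along $\{t_i=\infty\}$.

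Finally I would compute $\det C$ using Proposition \ref{prop:simplex}. Writing the transition matrix there as $T=aI-M$ with $M\f_v=\Sigma\b_v\f_v+\sum_j\b_{1-v_j,j}\f_{\sigma_jv}$, one observes that $M$ decomposes as a sum $\sum_jM^{(j)}$ of pairwise-commuting tensor-product operators $M^{(j)}=I^{\otimes(j-1)}\otimes N_j\otimes I^{\otimes(m-j)}$, where $N_j$ is the rank-one $2\times2$ matrix with both rows equal to $(\b_{0,j},\b_{1,j})$. Since $N_j$ has eigenvalues $0$ and $\b_{0,j}+\b_{1,j}=c_j-1$, the operator $M$ is simultaneously diagonalisable with eigenvalues $\sum_{j:\epsilon_j=1}(c_j-1)=\epsilon\tr c-|\epsilon|$, so $T$ has eigenvalues $\g_\epsilon$ and $\det T=\prod_{\epsilon\in\F_2^m}\g_\epsilon$. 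Combining with the matrix identity $TC^T=a\,\mathrm{diag}(1/\Pi\b_v)$ coming from the $\cI(\f,\psi)$ computation, together with $\prod_v\Pi\b_v=\prod_i(\b_{0,i}\b_{1,i})^{2^{m-1}}$, yields the stated formula for $\det C$.
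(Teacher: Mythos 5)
Your proposal is correct, and for the evaluation of the individual intersection numbers it is the same method the paper uses: the paper's entire argument there is the sentence ``by using results in \cite{M1}, we can evaluate the intersection numbers,'' and \cite{M1} is precisely the residue formula $\cI(\f,\f')=(2\pi\sqrt{-1})^m\sum_p \mathrm{Res}_p(\f)\,\mathrm{Res}_p(\f')/\prod_i\mu_i^{(p)}$ over the vertices of a normal crossing resolution that you invoke. You supply the details the paper outsources (the exponents $\b_{0,i},\b_{1,i},-a,\g_w$, the identification of contributing vertices, the chain combinatorics at infinity yielding $A_w$), and the one step you flag as unverified --- that the iterated blow-up of the corners at infinity is SNC with exponent $\g_w$ on the exceptional divisor over the codimension-$|w|$ corner --- is exactly the content of \cite{M1}, so your write-up is if anything more complete than the paper's. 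Where you genuinely diverge is the determinant: the paper computes $\det\bigl(\cI(\f_{x,v},\psi_{x,v'})\bigr)$ and $\det\bigl(\cI(\psi_{x,v},\psi_{x,v'})\bigr)$ (the latter ``by the method in the Appendix of \cite{MY}'') and combines them, whereas you diagonalize the transition matrix $T$ of Proposition \ref{prop:simplex} directly via the decomposition of $M$ into commuting rank-one tensor factors $N_j$ with trace $c_j-1$, obtaining $\det T=\prod_{w\in\F_2^m}\g_w$, and then use $C\,\tr T=a\,\mathrm{diag}(1/\Pi\b_v)$. The two routes are equivalent in substance --- indeed $\bigl(\cI(\psi_{x,v},\psi_{x,v'})\bigr)_{v,v'}=(2\pi\sqrt{-1})^m a^{-1}\,T\,\mathrm{diag}(1/\Pi\b_v)$, so the \cite{MY} determinant is $\det T$ in disguise --- but your tensor-product argument is self-contained and cleaner than a citation. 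Two small points to tidy: the second argument of $\cI$ lives in $\cH^m(\na_T^\vee)$, so you should note that the relation $a\psi_v=T_v\varPhi$ of Proposition \ref{prop:simplex} also holds there (the sign changes $a\to-a$, $\b\to-\b$ cancel); and you should confirm that the residues of $\psi_v$ and $\psi_{v'}$ at the shared vertex $p^v_j$ carry the same sign, so that their product is $+1$ and the off-diagonal entry $-1/(a\prod_{v_i=v_i'}\b_{v_i,i})$ comes out with the stated sign.
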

\begin{proof}
By using results in \cite{M1}, 
we can evaluate the intersection numbers.
It is easy to see that 
$$\det\Big(\frac{1}{(2\pi\sqrt{-1})^m}
\cI(\f_{x,v},\psi_{x,v'})_{v,v'\in \F_2^m}\Big)=
\frac
{1}
{\prod_{i=1}^m(\b_{0,i}\b_{1,i})^{2^{m-1}}}.
$$
By following the method in Appendix of \cite{MY}, we have 
$$\det\Big(\frac{1}{(2\pi\sqrt{-1})^m}
\cI(\psi_{x,v},\psi_{x,v'})_{v,v'\in \F_2^m}\Big)=
\frac
{\prod_{w\in \F_2^m} \g_w}
{a^{2^m}\prod_{i=1}^m(\b_{0,i}\b_{1,i})^{2^{m-1}}}.
$$
These imply the value of $\det(C)$. 
\end{proof}

By this fact,  we can regard the intersection form $\cI$ as 
that  between $\cH^m(\na_T)$ and $\cH^m(\na_T^\vee)$. 
It is bilinear over $\W_X^0(*S)$ and  
the intersection matrix $C$ 
is defined  by the frame $\{\f_v\}_{v\in \F_2^m}$.
Let $\cH^m_{\C(\a)}(\na_T)$ (resp. $\cH^m_{\C(\a)}(\na_T^\vee)$) 
be the linear span of $\f_v$ $(v\in \F_2^m)$ 
over the field $\C(\a)$ contained in $\cH^m(\na_T)$ 
(resp. $\cH^m(\na_T^\vee)$).
We have 
$$\psi_v\in \cH^m_{\C(\a)}(\na_T)$$
for any $v\in \F_2^m$ by Proposition \ref{prop:simplex}.

\section{Connection}
We introduce operators 
$$\na_{k}=\pa_{k}+\frac{a t_k}{1-t\tr x},\quad (k=1,\dots,m),$$
then we have
\begin{equation}
\label{eq:parderiv}
\pa_k\int_{\mathrm{reg}(0,1)^m} u(t,x)\f =\int_{\mathrm{reg}(0,1)^m} 
u(t,x)(\na_k \f),
\end{equation}
where $\mathrm{reg}(0,1)^m$ is the regularization of the domain $(0,1)^m$ of 
integration defined in \cite{AoKi}.
Thanks to the regularization,  
the integral converges whenever we assign complex values to parameters
under the condition (\ref{eq:non-integral}), and 
the order of the integration and the operator $\pa_k$ can be changed.
We set 
$$\na_X=\sum_{i=1}^m dx_i\wedge\na_i=d_X+\w_X\wedge,$$
where $d_X$ is the exterior derivation with respect to $x$:
$$d_Xf=\sum_{i=1}^m(\pa_i f) dx_i,\quad f\in \W^0_{\wt X}(*\wt S).
$$
It is easy to that 
\begin{equation}
\label{eq:commute}
\na_T\circ\na_X+\na_X\circ\na_T=0.
\end{equation}

We set 
\begin{eqnarray*}
\cH^{m,1}(\na_T)&=&
\W_{\wt X}^{m,1}(*\wt S)\big/\na_T(\W_{\wt X}^{m-1,1}(*\wt S)),\\
\cH^{m,1}(\na_T^\vee)&=&
\W_{\wt X}^{m,1}(*\wt S)\big/\na_T^\vee(\W_{\wt X}^{m-1,1}(*\wt S)).
\end{eqnarray*}

\begin{proposition}
\label{prop:connection}
There is a natural map $\na_X:\cH^m(\na_T)\to \cH^{m,1}(\na_T)$ induced from  
the derivation $\na_X$.
\end{proposition}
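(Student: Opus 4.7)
The plan is to define the candidate map on representatives by $[\f]\mapsto [\na_X\f]$ and verify that it descends to the quotient $\cH^m(\na_T)\to \cH^{m,1}(\na_T)$. First I would note that $\na_X=d_X+\w_X\wedge$ raises the $x$-degree by one while preserving the $t$-degree, so it carries $\W_{\wt X}^{m,0}(*\wt S)$ into $\W_{\wt X}^{m,1}(*\wt S)$. Since $\w_X\in\W_{\wt X}^{0,1}(*\wt S)$ has poles only along $\wt S$, wedging with it introduces no new poles, and we land in the space of representatives of $\cH^{m,1}(\na_T)$.

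The substantive point is well-definedness modulo $\na_T$-coboundaries. Suppose $\f=\na_T\eta$ with $\eta\in \W_{\wt X}^{m-1,0}(*\wt S)$. By the anticommutation relation (\ref{eq:commute}),
$$\na_X\f=\na_X(\na_T\eta)=-\na_T(\na_X\eta).$$
Since $\na_X$ raises the $x$-degree by one, $\na_X\eta\in\W_{\wt X}^{m-1,1}(*\wt S)$, hence $\na_T(\na_X\eta)\in\na_T(\W_{\wt X}^{m-1,1}(*\wt S))$, which is precisely the subspace quotiented out in the definition of $\cH^{m,1}(\na_T)$. Therefore $[\na_X\f]=0$ in $\cH^{m,1}(\na_T)$, the assignment factors through the quotient, and it produces the desired map.

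There is no serious obstacle here; the argument is entirely formal and relies only on the identity (\ref{eq:commute}) together with the bookkeeping of bidegrees $(p,q)$. The only point requiring any care is verifying that $\na_X$ respects the rationality and the pole locus $\wt S$, which is immediate from $\w_X\in\W_{\wt X}^{0,1}(*\wt S)$ and the fact that $d_X$ preserves the space of rational forms with poles along $\wt S$. If one wishes to say a little more, one could also record that $\na_X$ satisfies the Leibniz rule over $\W_X^0(*S)$, so the induced map is in fact a connection on the vector bundle $\cH^m(\na_T)$, but this goes slightly beyond what Proposition \ref{prop:connection} asserts and can be postponed to the subsequent sections where the connection form is actually computed.
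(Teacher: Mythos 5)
Your argument is correct and coincides with the paper's own proof: both reduce the claim to well-definedness modulo $\na_T$-coboundaries and settle it by the anticommutation identity (\ref{eq:commute}), writing $\na_X(\na_T\eta)=-\na_T(\na_X\eta)\in\na_T(\W_{\wt X}^{m-1,1}(*\wt S))$. The extra remarks on bidegrees and the pole locus are harmless bookkeeping that the paper leaves implicit.
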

\begin{proof}
We have only to show that if $\psi\in \na_T(\W_{\wt X}^{m-1,0}(*\wt S))$
then 
$$\na_X(\psi)\in \na_T(\W_{\wt X}^{m-1,1}(*\wt S)).$$
For any $\psi\in \na_T(\W_{\wt X}^{m-1,0}(*\wt S))$, 
there exists 
$f\in \W_{\wt X}^{m-1,0}(*\wt S)$ such that $\na_T(f)=\psi$.
By (\ref{eq:commute}), we have 
$$\na_X(\psi)=\na_X\circ \na_T(f)=-\na_T\circ \na_X(f)=\na_T(-\na_X(f)),$$
which belongs to $\na_T(\W_{\wt X}^{m-1,1}(*\wt S))$. 
\end{proof}

By this proposition, we can regard the map $\na_X$ as a connection of the 
vector bundle $\cH^m(\na_T)$ over $X$.
It is characterized as follows.
\begin{proposition}
\label{prop:PDE}
Let $v=(v_1,\dots,v_m)$ be an element of $\F_2^m$. If 
$v_k=0$ then 
$$\na_{k}(\f_v)=\frac{1}{x_k}
(-\b_{0,k}\f_v-\b_{1,k}\f_{\sigma_k\cdot v});$$
if $v_k=1$ then 
$$
\na_{k}(\f_v)=
\frac{1}{x_k}(-\b_{0,k}\f_{\sigma_k\cdot v}-\b_{1,k}\f_v)
+\frac{1}{1-v\tr x}\Big[\big(a-\sum_{j=1}^m\beta_{v_j,j}\big)\f_v
+\sum_{j=1}^m\beta_{1-v_j,j}\f_{\sigma_j\cdot v}\Big].
$$

\end{proposition}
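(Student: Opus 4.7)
The plan is to compute $\na_k(\f_v)$ directly from the definition of the operator $\na_k$ and then rewrite the result using the cohomological identities already established in Section~3.

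Since $\f_v = dT/\prod_i(t_i - v_i)$ is independent of $x$, one has $\pa_k \f_v = 0$, and hence
$$
\na_k \f_v \;=\; \frac{a t_k\, dT}{(1 - t\tr x)\prod_{i=1}^m (t_i - v_i)}
$$
as an element of $\W^{m,0}_{\wt X}(*\wt S)$. If $v_k = 0$, then $t_k - v_k = t_k$, so the factor $t_k$ in the numerator cancels with this denominator, leaving the form $\dfrac{a\,dT}{(1 - t\tr x)\prod_{i\ne k}(t_i - v_i)}$. Dividing the identity of Lemma~\ref{lem:cohomolog} (with $j = k$ in its $v_k = 0$ branch) by $x_k$ then yields precisely the claimed formula.

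If instead $v_k = 1$, I use the elementary partial-fraction identity $\dfrac{t_k}{t_k - 1} = 1 + \dfrac{1}{t_k - 1}$ to split $\na_k \f_v$ as
$$
\frac{a\,dT}{(1-t\tr x)\prod_{i\ne k}(t_i-v_i)} \;+\; \frac{a\,dT}{(1-t\tr x)\prod_{i=1}^m(t_i - v_i)}.
$$
The first summand, after dividing by $x_k$, is handled by Lemma~\ref{lem:cohomolog} in its $v_k = 1$ branch and gives exactly the term $\tfrac{1}{x_k}(-\b_{0,k}\f_{\sigma_k\cdot v} - \b_{1,k}\f_v)$. The second summand equals $\tfrac{a}{1-t\tr x}\f_v = \tfrac{a\psi_v}{1 - v\tr x}$ directly from the defining formula (\ref{eq:frame}) of $\psi_v$, and Proposition~\ref{prop:simplex} rewrites $a\psi_v$ in $\cH^m(\na_T)$ as the combination of $\f_v$ and the forms $\f_{\sigma_j\cdot v}$ that fills out the second bracket of the desired formula.

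The computation is essentially mechanical once the partial-fraction split is in hand, so I do not anticipate a conceptual obstacle; the underlying well-definedness of $\na_k$ as a map on $\cH^m(\na_T)$ is guaranteed by the same argument as in Proposition~\ref{prop:connection}, specialised to the $k$-th component. The only point requiring genuine care is bookkeeping: one must correctly match the two branches of Lemma~\ref{lem:cohomolog} to the values of $v_j$ appearing in the sum coming from Proposition~\ref{prop:simplex}, so that the coefficients $\b_{0,j}$ and $\b_{1,j}$ are paired with the correct basis forms $\f_v$ and $\f_{\sigma_j\cdot v}$.
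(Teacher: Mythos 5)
Your argument is exactly the paper's proof: note $\pa_k\f_v=0$ so $\na_k\f_v=\frac{at_k}{1-t\tr x}\f_v$, cancel $t_k$ against $t_k-v_k$ when $v_k=0$ and apply Lemma \ref{lem:cohomolog}, and when $v_k=1$ split $at_k=a(t_k-1)+a$ (your identity $\frac{t_k}{t_k-1}=1+\frac{1}{t_k-1}$ is the same decomposition), handle the first piece by Lemma \ref{lem:cohomolog} and recognize the second as $\frac{a\psi_v}{1-v\tr x}$, rewritten via Proposition \ref{prop:simplex}. One remark: Proposition \ref{prop:simplex} gives $a\psi_v=(a-\sum_j\b_{v_j,j})\f_v-\sum_j\b_{1-v_j,j}\f_{\sigma_j\cdot v}$, so the computation (yours and the paper's) actually produces a minus sign on the second sum, indicating that the ``$+$'' in the stated bracket of the proposition is a sign typo rather than a defect of your argument.
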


\begin{proof} 
Since $\pa_{k}\cdot \f_v=0$, we have 
$$\na_{k}(\f_v)=\w_{k}\cdot \f_v=\frac{at_k}{1-t\tr x}
\cdot \frac{dT}{\prod\limits_{i=1}^m(t_i-v_i)}.
$$
If $v_i=0$ then 
$$
\na_{k}(\f_v)=\frac{1}{x_k}\cdot 
\frac{ax_x dT }{(1-t\tr x)\prod\limits_{1\le i\le m}^{i\ne k}(t_i-v_i)}
=\frac{-\b_{0,k}\f_v-\b_{1,k}\f_{\sigma_k\cdot v}}{x_k}
$$
by Lemma \ref{lem:cohomolog}. If $v_i=1$ then 
\begin{eqnarray*}
\na_{k}(\f_v)&=&
\frac{a(t_k-1)+a}{1-t\tr x}
\cdot \dfrac{dT}{\prod\limits_{i=1}^m(t_i- v_i)}
=
\dfrac{a\; dT}{(1-t \tr x)\prod\limits_{1\le i\le m}^{i\ne k}(t_i- v_i)}
+\dfrac{a\; dT}{(1- t \tr x)\prod\limits_{i=1}^m(t_i- v_i)}\\
&=&
\frac{-\b_{0,k}\f_{\sigma_k\cdot v}-\b_{1,k}\f_v}{x_k}+
\frac{a\psi_v}{1-v\tr x}
\end{eqnarray*}
by Lemma \ref{lem:cohomolog}. 
Rewrite the last term by Proposition \ref{prop:simplex}.
\end{proof}

\begin{cor}
\label{cor:higher}
For any 
$v=(v_1,\dots,v_m)\in \F_2^m$,  
we have 
$$\Big(\prod_{1\le i\le m}^{v_i=1}x_i\na_i\Big)\cdot \f_{(0,\dots,0)}
=\sum_{w\preceq v}\Big[\prod_{1\le i\le m}^{v_i=1}(-\b_{w_i,i})\Big]\f_w.$$
\end{cor}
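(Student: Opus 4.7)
The plan is induction on $r = |v|$. The base case $r = 0$ gives $v = (0,\dots,0)$, where both sides reduce to $\f_{(0,\dots,0)}$ (empty product on the left, and on the right a single summand whose empty coefficient product equals $1$). The inductive step peels off one factor $x_j\na_j$ from the product on the left, so as a preliminary I verify that the operators $x_i\na_i$ for distinct indices with $v_i=1$ mutually commute on $\cH^m(\na_T)$. Since $\pa_i(x_j)=0$ for $i\ne j$, this reduces to checking $[\na_i,\na_j]=0$, and a direct computation gives
$$[\na_i,\na_j]\;=\;\pa_i\w_{X_j}-\pa_j\w_{X_i}\;=\;\frac{at_it_j}{(1-t\tr x)^2}-\frac{at_jt_i}{(1-t\tr x)^2}\;=\;0,$$
so the product on the left-hand side is unambiguous and any single factor may be extracted from the outside.

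For the induction step, take $v$ with $|v|=r+1$, pick any $j$ with $v_j=1$, and set $v'=v-e_j$ so that $v'_j=0$ and $|v'|=r$. Using commutativity and the inductive hypothesis,
$$\Big(\prod_{1\le i\le m}^{v_i=1}x_i\na_i\Big)\f_{(0,\dots,0)}\;=\;x_j\na_j\sum_{w'\preceq v'}\Big[\prod_{1\le i\le m}^{v'_i=1}(-\b_{w'_i,i})\Big]\f_{w'}.$$
For each $w'\preceq v'$, the condition $v'_j=0$ forces $w'_j=0$, so Proposition \ref{prop:PDE} applied with $k=j$ gives $x_j\na_j\f_{w'}=-\b_{0,j}\f_{w'}-\b_{1,j}\f_{w'+e_j}$. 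Substituting yields a linear combination of $\f_{w'}$'s and $\f_{w'+e_j}$'s indexed by $w'\preceq v'$.

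The final step is reindexing. The set $\{w\preceq v\}$ partitions into $\{w'\preceq v'\}$ (those with $w_j=0$, and for such $w'$ the constraint $w'\preceq v'$ is automatic) and $\{w'+e_j : w'\preceq v'\}$ (those with $w_j=1$), and the prescribed coefficient factors as
$$\prod_{1\le i\le m}^{v_i=1}(-\b_{w_i,i})\;=\;(-\b_{w_j,j})\prod_{1\le i\le m}^{v'_i=1}(-\b_{w_i,i}),$$
which matches $-\b_{0,j}$ or $-\b_{1,j}$ from Proposition \ref{prop:PDE} accordingly. I expect the main point requiring care to be this bookkeeping---verifying that $w'+e_j\preceq v$ is automatic from $w'\preceq v'$ together with $v'=v-e_j$, and that no $w\preceq v$ is missed or double-counted---but this is immediate from the correspondence of index subsets $I_v=I_{v'}\sqcup\{j\}$.
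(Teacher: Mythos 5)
Your proof is correct and follows the same route as the paper, which disposes of this corollary in one line ("induction on $|v|$ and Proposition \ref{prop:PDE}"); your induction step, the use of the $v_k=0$ case of Proposition \ref{prop:PDE}, and the reindexing of $\{w\preceq v\}$ via $I_v=I_{v'}\sqcup\{j\}$ are exactly the intended argument, with the commutativity check $[\na_i,\na_j]=0$ a harmless (and correct) extra verification.
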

\begin{proof}
Use the induction on $|v|$ and Proposition \ref{prop:PDE}.
\end{proof}

We give some examples:
\begin{eqnarray*}
(x_1\na_{1})\cdot \f_{(0,0,0)}&=&-\b_{0,1}\f_{(0,0,0)}-\b_{1,1}\f_{(1,0,0)}
,\\
(x_1x_2\na_{1}\na_{2})\cdot \f_{(0,0,0)}
&=&\b_{0,1}\b_{0,2}\f_{(0,0,0)}+\b_{1,1}\b_{0,2}\f_{(1,0,0)}
+\b_{0,1}\b_{1,2}\f_{(0,1,0)}+\b_{1,1}\b_{1,2}\f_{(1,1,0)},\\
(x_1x_2x_3\na_{1}\na_{2}\na_{3})\cdot \f_{(0,0,0)}
&=&-\b_{0,1}\b_{0,2}\b_{0,3}\f_{(0,0,0)}-\b_{1,1}\b_{0,2}\b_{0,3}\f_{(1,0,0)}\\
& &-\b_{0,1}\b_{1,2}\b_{0,3}\f_{(0,1,0)}-\b_{0,1}\b_{0,2}\b_{1,3}\f_{(0,0,1)}\\
& &-\b_{1,1}\b_{1,2}\b_{0,3}\f_{(1,1,0)}-\b_{1,1}\b_{0,2}\b_{1,3}\f_{(1,0,1)}\\
& &-\b_{0,1}\b_{1,2}\b_{1,3}\f_{(0,1,1)}-\b_{1,1}\b_{1,2}\b_{1,3}\f_{(1,1,1)}.\\
\end{eqnarray*}

To express $\na_X$ restricted to $\cH^m_{\C(\a)}(\na_T)$
by the intersection form $\cI$, we give some lemmas and a proposition.

\begin{lemma}
\label{lem:intinv}
Let $\f$ be an element of $\cH^m_{\C(\a)}(\na_T)$ and $\f'$ be 
that of  $\cH^m_{\C(\a)}(\na_T^\vee)$. Then we have 
$$\cI(\na_i\f,\f')+\cI(\f,\na_i^\vee\f')=0,
$$
where $1\le i\le m$ and 
$\displaystyle{\na_i^\vee=\pa_{i}-\frac{a t_i}{1-t\tr x}}$.
\end{lemma}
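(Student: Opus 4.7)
The strategy rests on two inputs: the constancy of the intersection form in $x$ on the $\C(\a)$-spans, which comes from Proposition~\ref{prop:Int-cohom}, and the flatness of the intersection pairing with respect to the Gauss--Manin connections $\na_X$, $\na_X^\vee$.

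For the constancy: the explicit formulas of Proposition~\ref{prop:Int-cohom} show that each intersection number $\cI(\f_{x,v},\f_{x,v'})$ is a rational function of the parameters $a,b,c$ alone, independent of $x$. Since $\f \in \cH^m_{\C(\a)}(\na_T)$ and $\f' \in \cH^m_{\C(\a)}(\na_T^\vee)$ are $\C(\a)$-linear combinations of the respective bases, the $\C(\a)$-bilinearity of $\cI$ gives $\pa_i\cI(\f,\f') = 0$.

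For the flatness, I would establish the identity
$$\pa_i\cI(\f,\f') \;=\; \cI(\na_i\f,\f') + \cI(\f,\na_i^\vee\f') \qquad (\ast)$$
for arbitrary classes. Starting from $\cI(\f,\f') = \int_{\C^m_t}\jmath_x(\f)\wedge\f'$ and differentiating under the integral sign (legitimate because $\jmath_x(\f)$ has compact support), one obtains a sum of two terms; writing $\na_i = \pa_i + \w_{X_i}$ and $\na_i^\vee = \pa_i - \w_{X_i}$ with $\w_{X_i} = \frac{at_i}{1-t\tr x}$, the two sides of $(\ast)$ differ by contributions of the form $\int_{\C^m_t}\na_T(\text{compactly-supported})\wedge\f'$. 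These integrals vanish by Stokes' theorem, since $\f'$ is of type $(m,0)$ in $t$ and therefore $\w_T\wedge\f' = 0$ identically while $d_T\f' = 0$ on the support of the primitive (which is chosen disjoint from the singularities of $\f'$).

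Combining the vanishing of $\pa_i\cI(\f,\f')$ from the first input with $(\ast)$ from the second yields the lemma. \textbf{Main obstacle.} The delicate point is the verification of $(\ast)$, which encodes the compatibility of the Aomoto--Kita isomorphism $\jmath_x$ of Fact~\ref{fact:TCH}(ii) with the Gauss--Manin connection; explicitly, one must check that $\pa_i\jmath_x(\f)$ represents the same compactly-supported cohomology class as $\jmath_x(\na_i\f)$, so that the $\w_{X_i}$ contributions match on both sides. This is a standard feature of the theory, but its direct verification requires a careful analysis of how the support-refining construction of $\jmath_x$ varies in $x$.
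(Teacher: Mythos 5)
Your proposal follows the paper's proof essentially verbatim: both deduce $\pa_i\cI(\f,\f')=0$ from the $x$-independence of the intersection numbers in Proposition~\ref{prop:Int-cohom}, both obtain $\pa_i\cI(\f,\f')=\cI(\na_i\f,\f')+\cI(\f,\na_i^\vee\f')$ by differentiating under the integral sign (the paper via the factorization $\pa_i(u\f)\wedge\frac{\f'}{u}+u\f\wedge\pa_i(\frac{\f'}{u})$), and both defer the compatibility of $\jmath_x$ with the connection to the standard theory (the paper cites \cite{M2} for exactly the point you flag as the main obstacle). The argument is correct and no further comparison is needed.
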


\begin{proof}
It is clear by Proposition \ref{prop:Int-cohom} 
that 
$$\pa_i\cI(\f,\f')=0$$
for $1\le i\le m$. 
For any compact set $K$ in $\C_x^m$, we have 
\begin{eqnarray*}
& &\pa_i\int_{K}\f\wedge \f'=
\int_{K}\pa_i\f\wedge \f'+\int_{K}\f\wedge \pa_i\f'\\
&=&\int_{K}\pa_i(u(t,x)\cdot \f)\wedge \frac{\f'}{u(t,x)}+
\int_{K}u(t,x)\cdot\f\wedge \pa_i\Big(\frac{\f'}{u(t,x)}\Big)
=\int_{K}(\na_i\f)\wedge \f'+\int_{K}\f\wedge (\na_i^\vee\f').
\end{eqnarray*}
We can show that the commutativity of $\jmath_x$ and $\na_i^\vee$ 
by following results in \cite{M2}.
These imply this lemma.
\end{proof}

We define maps 
$$
\begin{array}{llcl}
\cR_{k}&:\cH^m(\na_{T})\ni \f \mapsto& 
\underset{x_k=0}{\mathrm{Res}}
(\na_X(\f))
&\in \cH^m(\na_{T}),\\
\cR_{k,v}&:\cH^m(\na_{T})\ni
\f \mapsto& \underset{x_k=S_v\cap L_k}{\mathrm{Res}}(\na_X(\f))
&\in \cH^m(\na_{T}),
\end{array}
$$
where $\underset{x_k=0}{\mathrm{Res}}(\eta)$ and 
$\underset{x_k=S_v\cap L_k}{\mathrm{Res}}(\eta)$ 
are the residues of 
$\eta\in \W^{m,1}_{\wt X}(*\wt S)$ with respect to the variable $x_k$ 
at $0$ and at the intersection point $S_v\cap L_k$ of 
$S_v$ and the line $L_k$ in $X$ fixing the variables 
 $x_1,\dots,x_{k-1},x_{k+1},\dots,x_m$.

\begin{proposition}
[Orthogonal Principle]
\label{prop:orthogonal}
\begin{itemize}
\item[$\mathrm{(i)}$]
For $\f\in \cH^m_{\C(\a)}(\na_T)$ and $\f '\in \cH^m_{\C(\a)}(\na^\vee)$, 
we have 
$$
\cI(\cR_{k}(\f ),\f ')+\cI(\f ,\cR_{k}^\vee(\f '))=0,\quad 
\cI(\cR_{k,v}(\f ),\f ')+\cI(\f ,\cR_{k,v}^\vee(\f '))=0,
$$
where $\cR_{k}^\vee$ and $\cR_{k,v}^\vee$ are naturally defined by  
$\na_X^\vee=\sum_{i=1}^m dx_i\na_i^\vee$ and the residue.
\item[$\mathrm{(ii)}$]
Let $\f $ and $\f '$ be eigenvectors 
of $\cR_{k}$ and $\cR_{k}^\vee$ (resp. $\cR_{k,v}$ and $\cR_{k,v}^\vee)$
with eigenvalues $\mu$ and $\mu'$, respectively.
If $\mu+\mu'\ne0$ then $\cI(\f ,\f ')=0$.
\end{itemize}
\end{proposition}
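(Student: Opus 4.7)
The plan is to derive both parts of the Orthogonal Principle from the integration-by-parts identity recorded in Lemma \ref{lem:intinv}. Part (i) is obtained by taking a residue of that identity coordinate by coordinate, and part (ii) then follows from (i) by a one-line linear-algebra argument.

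For part (i), first recall that for $\f=\f_v$, Proposition \ref{prop:PDE} gives an explicit expansion of $\na_k(\f_v)$ in the frame $\{\f_w\}$ whose only poles in $x_k$ are the simple pole along $x_k=0$ and, when $v_k=1$, the simple pole along $S_v=\{1-v\tr x=0\}$; in particular $\na_i(\f_v)$ is regular at $x_k=0$ for $i\neq k$, so the residue of $\na_X(\f_v)=\sum_i\na_i(\f_v)\,dx_i$ at $x_k=0$ comes only from the $dx_k$-term. Writing a general $\f\in\cH^m_{\C(\a)}(\na_T)$ and expanding $\na_k\f=\sum_{w\in\F_2^m}g_w(x)\f_w$ with $g_w\in\W^0_X(*S)$, this gives
$$
\cR_k(\f)\;=\;\sum_{w\in\F_2^m}\mathrm{Res}_{x_k=0}\bigl(g_w\,dx_k\bigr)\,\f_w .
$$
Since $\f'\in\cH^m_{\C(\a)}(\na_T^\vee)$ has constant-in-$x$ coefficients in the frame, the $\W^0_X(*S)$-bilinearity of $\cI$ guaranteed by Proposition \ref{prop:Int-cohom} yields
$$
\mathrm{Res}_{x_k=0}\bigl(\cI(\na_k\f,\f')\,dx_k\bigr)\;=\;\cI(\cR_k(\f),\f'),
$$
and the same manipulation applied on the dual side gives $\mathrm{Res}_{x_k=0}\bigl(\cI(\f,\na_k^\vee\f')\,dx_k\bigr)=\cI(\f,\cR_k^\vee(\f'))$. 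Taking $\mathrm{Res}_{x_k=0}(\,\cdot\,dx_k)$ of the identity of Lemma \ref{lem:intinv} then delivers the first equality of part (i). The identity for $\cR_{k,v}$ follows in exactly the same way, replacing the residue at $x_k=0$ by the residue at $x_k=v_k^{-1}\bigl(1-\sum_{j\neq k}v_jx_j\bigr)$, to which only the $1/(1-v\tr x)$ summand of $\na_k\f_v$ in Proposition \ref{prop:PDE} contributes.

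For part (ii), substitute $\cR_k(\f)=\mu\f$ and $\cR_k^\vee(\f')=\mu'\f'$ (or the $\cR_{k,v}$-analogues) into the identity proved in (i) to obtain
$$
0\;=\;\cI(\cR_k(\f),\f')+\cI(\f,\cR_k^\vee(\f'))\;=\;(\mu+\mu')\,\cI(\f,\f'),
$$
and divide by $\mu+\mu'\neq 0$. The main technical point—and really the only step that needs care—is the commutation of the residue operation with the intersection pairing used in (i); this is not deep, but it must be handled through an explicit frame because $\na_k\f$ lies in the full $\W^0_X(*S)$-module $\cH^m(\na_T)$ rather than in its $\C(\a)$-span, and the $\W^0_X(*S)$-bilinearity of $\cI$ is what ensures the extracted residue is independent of the choice of frame.
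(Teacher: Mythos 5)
Your proof is correct and follows essentially the same route as the paper: part (i) is obtained by extracting the polar parts (residues in $x_k$ at $0$ and at $S_v\cap L_k$) of the identity in Lemma \ref{lem:intinv}, and part (ii) is the same one-line eigenvector substitution. The only difference is that you spell out, via the frame expansion and the $\W^0_X(*S)$-bilinearity of $\cI$, why taking residues commutes with the pairing --- a step the paper leaves implicit in the phrase ``see coefficients of $1/x_k$ and $1/(1-v\tr x)$.''
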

\begin{proof} (i)\quad  
We have only to see coefficients of $1/x_k$ and $1/(1-v\tr x)$ 
of the identity in Lemma \ref{lem:intinv}. 

\smallskip
\noindent (ii) \quad 
Note that
$$
\cI(\cR_{k}(\f ),\f ')+\cI(\f ,\cR_{k}^\vee(\f '))=
\cI(\mu\f ,\f ')+\cI(\f ,\mu'\f ')=(\mu+\mu')\cI(\f ,\f '). 
$$
  By (i), we have $(\mu+\mu')\cI(\f ,\f ')=0$.  
\end{proof}

\begin{lemma}
\label{lem:residue}
\begin{itemize}
\item[$\mathrm{(i)}$]
Suppose that $c_k\ne1$ when we assign a complex value to it. 
The eigenvalues of the map $\cR_{k}$ are $0$ and $-\b_{0,k}-\b_{1,k}=1-c_k$
The eigenspace $W_k$ of the map $\cR_{k}$ with eigenvalue $0$
is $2^{m-1}$-dimensional and expressed as 
$$W_k=\la \f_v-\f_{\sigma_k\cdot v}\mid v\in \F_2^m(0_k)\ra,$$
which is the linear span of $\f_v-\f_{\sigma_k\cdot v}$ for elements $v$ 
in 
$$\F_2^m(0_k)=\{v=(v_1,\dots,v_m)\in \F_2^m\mid v_k=0\}.$$ 
The eigenspace of the map $\cR_{k}$ with eigenvalue $1-c_k$ 
is $2^{m-1}$-dimensional and 
$$W_k^\perp=\la \b_{0,k}\f_v+\b_{1,k}\f_{\sigma_k\cdot v}
\mid v\in \F_2^m(0_k)\ra.$$

\item[$\mathrm{(ii)}$] Suppose that 
$\Sigma\b_v-a
\ne 0$ for a given $v\in \F_2^m$
when we assign complex values to them.
The eigenvalues of the map $\cR_{k,v}$ are $\Sigma\b_v-a$ and $0$. 
The eigenspace $W_v$ of the map $\cR_{k,v}$ with eigenvalue $\Sigma\b_v-a$ 
is spanned by $\psi_v$, and that with eigenvalue $0$ 
is its orthogonal complement 
$$W_v^\perp=\{\f\in \cH^m_{\C(\a)}(\na_T)\mid \cI(\f,\psi_v)=0\},$$
which is spanned by $\f_w$ for $w\ne v$.
\end{itemize}
\end{lemma}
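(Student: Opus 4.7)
The strategy is to extract both residues explicitly in the $\C(\a)$-frame $\{\f_v\}_{v \in \F_2^m}$ by applying Proposition \ref{prop:PDE}. Since $\na_X(\f_v) = \sum_{i=1}^m dx_i \wedge \na_i(\f_v)$ and $\cR_k, \cR_{k,v}$ are residues of the $dx_k$-component, only $\na_k(\f_v)$ can contribute. The $1/x_k$ summand in $\na_k(\f_v)$ is the sole source of an $x_k = 0$ pole, while the $1/(1-v\tr x)$ summand---present only when $v_k = 1$---is the sole source of a pole at $S_v \cap L_k$, and only when the two indices match.

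For part (i), reading off the $1/x_k$ residue in Proposition \ref{prop:PDE} gives
\begin{equation*}
\cR_k(\f_v) = \begin{cases} -\b_{0,k}\f_v - \b_{1,k}\f_{\sigma_k \cdot v}, & v_k = 0, \\ -\b_{0,k}\f_{\sigma_k \cdot v} - \b_{1,k}\f_v, & v_k = 1, \end{cases}
\end{equation*}
because for $v_k = 1$ the extra $1/(1-v\tr x)$ summand is regular at $x_k = 0$. Each plane $\la \f_v, \f_{\sigma_k \cdot v}\ra$ with $v \in \F_2^m(0_k)$ is therefore $\cR_k$-stable, and in this basis $\cR_k$ is represented by a matrix both of whose columns equal $(-\b_{0,k}, -\b_{1,k})^\top$. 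Its trace is $-\b_{0,k} - \b_{1,k} = 1 - c_k$ and its determinant vanishes, so the eigenvalues are $0$ and $1 - c_k$ with respective eigenvectors $\f_v - \f_{\sigma_k \cdot v}$ and $\b_{0,k}\f_v + \b_{1,k}\f_{\sigma_k \cdot v}$; taking the direct sum over the $2^{m-1}$ pairs yields the two $2^{m-1}$-dimensional eigenspaces $W_k$ and $W_k^\perp$.

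For part (ii), assume $v_k = 1$ (otherwise $S_v \cap L_k$ is not a point and $\cR_{k,v}$ vanishes identically). A pole of $1/(1 - w\tr x)$ at $S_v \cap L_k$ forces $w_k = 1$ and equality of $\sum_{j \ne k, w_j = 1}x_j$ and $\sum_{j \ne k, v_j = 1}x_j$ as polynomials in the remaining coordinates, hence $w = v$. Combined with $\mathrm{Res}_{x_k = x_k^*}\frac{dx_k}{1 - v\tr x} = -1$ for $x_k^* = 1 - \sum_{j \ne k, v_j = 1}x_j$, this yields
\begin{equation*}
\cR_{k,v}(\f_v) = -a\psi_v, \qquad \cR_{k,v}(\f_w) = 0 \text{ for } w \ne v.
\end{equation*}
Hence $\cR_{k,v}$ has image $\la \psi_v \ra$ and kernel $\la \f_w : w \ne v \ra$ of dimension $2^m - 1$. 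Substituting the expansion $a\psi_v = (a - \Sigma\b_v)\f_v - \sum_j \b_{1-v_j, j}\f_{\sigma_j \cdot v}$ from Proposition \ref{prop:simplex} and observing that $\sigma_j \cdot v \ne v$ for every $j$ (so each $\f_{\sigma_j \cdot v}$ is in the kernel), I obtain
\begin{equation*}
\cR_{k,v}(\psi_v) = \tfrac{a - \Sigma\b_v}{a} \cdot (-a\psi_v) = (\Sigma\b_v - a)\psi_v.
\end{equation*}
The identification $W_v^\perp = \ker \cR_{k,v}$ then follows from Proposition \ref{prop:Int-cohom}, which shows $\cI(\f_w, \psi_v) = 0$ exactly when $w \ne v$, so both subspaces coincide with $\la \f_w : w \ne v \ra$.

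The step most requiring care is the translation between the ``off-diagonal'' action $\cR_{k,v}(\f_v) = -a\psi_v$ and the eigenvector statement for $\psi_v$ itself. Naively one sees only that the image is $\la \psi_v \ra$; it is the expansion of $\psi_v$ in the $\f$-basis via Proposition \ref{prop:simplex}, together with the observation that every $\f_{\sigma_j \cdot v}$ (for any $j$, including $j = k$) lies in the kernel of $\cR_{k,v}$, that lets one read off the eigenvalue as $-a$ times the $\f_v$-coefficient of $\psi_v$.
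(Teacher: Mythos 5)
Your proposal is correct and follows essentially the same route as the paper: both read the residues off Proposition \ref{prop:PDE}, use the expansion of $a\psi_v$ from Proposition \ref{prop:simplex} to exhibit $\psi_v$ as the eigenvector with eigenvalue $\Sigma\b_v-a$, and invoke Proposition \ref{prop:Int-cohom} for the orthogonality of the kernel to $\psi_v$. The only (cosmetic) differences are that you diagonalize the $2\times 2$ blocks via trace and determinant where the paper verifies the eigenvectors directly, and that you make explicit the edge cases ($v_k=0$, and which $1/(1-w\tr x)$ terms actually contribute a pole at $S_v\cap L_k$) that the paper leaves implicit.
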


\begin{proof}
(i) Let $v$ be an element of $\F_2^m(0_k)$. 
Proposition \ref{prop:PDE} implies that  
\begin{eqnarray*}
\cR_k(\f_v-\f_{\sigma_k\cdot v})
&=&(-\b_{0,k}\f_v-\b_{1,k}\f_{\sigma_k\cdot v})
-(-\b_{0,k}\f_{\sigma_k\cdot (\sigma_k\cdot v)}-\b_{1,k}\f_{\sigma_k\cdot v})=0,
\\[2mm]
\cR_k(\b_{0,k}\f_v+\b_{1,k}\f_{\sigma_k\cdot v})
&=&\b_{0,k}(-\b_{0,k}\f_v-\b_{1,k}\f_{\sigma_k\cdot v})
+\b_{1,k}(-\b_{0,k}\f_{\sigma_k\cdot (\sigma_k\cdot v)}
-\b_{1,k}\f_{\sigma_k\cdot v})\\
&=&(-\b_{0,k}-\b_{1,k})(\b_{0,k}\f_v+\b_{1,k}\f_{\sigma_k\cdot v}).
\end{eqnarray*}
Thus $\f_v- \f_{\sigma_k\cdot v }$ is an eigenvector of $\cR_k$ with 
eigenvalue $0$, and 
$\b_{0,k}\f_v+\b_{1,k}\f_{\sigma_k\cdot v}$ 
is an eigenvector of $\cR_k$ with 
eigenvalue $1-c_k$ 
for each $v\in \F_2^m(0_k)$. 
Hence these eigenspaces are 
$2^{m-1}$-dimensional. 

\medskip\noindent
(ii) Propositions \ref{prop:simplex} and \ref{prop:PDE} imply that 
\begin{eqnarray*}
& &\cR_{k,v}(a\psi_v)=
\cR_{k,v}\Big[\big(a-\sum_{j=1}^m\beta_{v_j,j}\big)\f_v
-\sum_{j=1}^m\beta_{1-v_j,j}\f_{\sigma_j\cdot v}\Big]\\
&=&-\big(a-\sum_{j=1}^m\beta_{v_j,j}\big)
\Big[\big(a-\sum_{j=1}^m\beta_{v_j,j}\big)\f_v
-\sum_{j=1}^m\beta_{1-v_j,j}\f_{\sigma_j\cdot v}\Big]
=(\Sigma\b_v-a)(a\psi_v).
\end{eqnarray*}
Note that the image of $\cR_{k,v}$ is spanned by $\psi_v$.
Proposition \ref{prop:PDE} also implies that $\cR_{k,v}\f_w=0$ for $w\ne v$. 
By Proposition \ref{prop:Int-cohom},  
they are orthogonal to 
$\psi_v$ with respect to the intersection form $\cI$. 
\end{proof}

\begin{lemma}
\label{lem:projection}
Suppose that $c_k\ne1$ when we assign a complex value to it. 
Then the projection $\pr_k:\cH^m_{\C(\a)}(\na_T)\to W_k$
is expressed as
$$\pr_k(\f)=
\sum_{v\in \F_2^m(0_k)}
\frac{\b_{1,k}\Pi\b_v
}
{(2\pi\sqrt{-1})^m(\b_{0,k}+\b_{1,k})}
\cI\big(\f,(\psi_v\!-\!\psi_{\sigma_k\cdot v})\big)
(\f_v\!-\!\f_{\sigma_k\cdot v}).$$
\end{lemma}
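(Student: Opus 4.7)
The plan is to verify the claimed formula directly on a basis of $\cH^m_{\C(\a)}(\na_T)$ adapted to the decomposition $W_k\oplus W_k^\perp$ supplied by Lemma \ref{lem:residue}(i). Let $P(\f)$ denote the right-hand side of the formula. Since $P$ is $\C(\a)$-linear and visibly takes values in $W_k$, it suffices to check that $P$ vanishes on $W_k^\perp$ and restricts to the identity on $W_k$.

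First I would test $P$ on the basis $\{\b_{0,k}\f_w+\b_{1,k}\f_{\sigma_k\cdot w}\}_{w\in \F_2^m(0_k)}$ of $W_k^\perp$. Expanding
$$\cI(\b_{0,k}\f_w+\b_{1,k}\f_{\sigma_k\cdot w},\ \psi_v-\psi_{\sigma_k\cdot v})$$
via the second formula of Proposition \ref{prop:Int-cohom}, the cross-pairings $\cI(\f_w,\psi_{\sigma_k\cdot v})$ and $\cI(\f_{\sigma_k\cdot w},\psi_v)$ vanish because their $k$-th components disagree, and only the $v=w$ contribution survives. The elementary identity $\Pi\b_{\sigma_k\cdot w}=\Pi\b_w\cdot\b_{1,k}/\b_{0,k}$, valid since $w_k=0$, then forces the two remaining terms to cancel, so $P$ annihilates $W_k^\perp$.

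Next I would test $P$ on the basis $\{\f_w-\f_{\sigma_k\cdot w}\}_{w\in \F_2^m(0_k)}$ of $W_k$. An analogous expansion shows that $\cI(\f_w-\f_{\sigma_k\cdot w},\ \psi_v-\psi_{\sigma_k\cdot v})$ vanishes for $v\ne w$, while for $v=w$ it equals $(2\pi\sqrt{-1})^m(\b_{0,k}+\b_{1,k})/(\Pi\b_w\cdot\b_{1,k})$ by the same ratio identity. Multiplying by the normalizing factor $\b_{1,k}\Pi\b_v/((2\pi\sqrt{-1})^m(\b_{0,k}+\b_{1,k}))$ produces exactly $1$ for $v=w$ and $0$ otherwise, so $P(\f_w-\f_{\sigma_k\cdot w})=\f_w-\f_{\sigma_k\cdot w}$, as required.

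Conceptually, the formula realizes $\pr_k$ as the expansion in the basis $\{\f_v-\f_{\sigma_k\cdot v}\}$ of $W_k$ using a dual basis inside $\cH^m_{\C(\a)}(\na_T^\vee)$ built from the forms $\psi_v-\psi_{\sigma_k\cdot v}$; the Orthogonal Principle (Proposition \ref{prop:orthogonal}) explains why such a dual basis pairs trivially with $W_k^\perp$, since on that eigenspace the eigenvalue $1-c_k$ is nonzero under our assumption. The only real obstacle is the bookkeeping of the products $\Pi\b_v$ under $\sigma_k$, but both cancellations above are ultimately driven by the single identity $\Pi\b_{\sigma_k\cdot w}/\Pi\b_w=\b_{1,k}/\b_{0,k}$ for $w\in \F_2^m(0_k)$.
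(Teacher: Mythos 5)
Your proposal is correct and follows essentially the same route as the paper: both verify the formula by pairing against the basis $\{\f_w-\f_{\sigma_k\cdot w}\}$ of $W_k$ and the basis $\{\b_{0,k}\f_w+\b_{1,k}\f_{\sigma_k\cdot w}\}$ of $W_k^\perp$, using the second intersection formula of Proposition \ref{prop:Int-cohom} to get the duality relation on $W_k$ and the cancellation on $W_k^\perp$. You merely make explicit the ratio identity $\Pi\b_{\sigma_k\cdot w}/\Pi\b_w=\b_{1,k}/\b_{0,k}$ that the paper leaves implicit.
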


\begin{proof}
By Proposition \ref{prop:Int-cohom}, we have 
$$
\frac{\b_{1,k}\Pi\b_v
}{(2\pi\sqrt{-1})^m(\b_{0,k}+\b_{1,k})}
\cI\big((\f_w-\f_{\sigma_k\cdot w}),
(\psi_v-\psi_{\sigma_k\cdot v})\big)
=\d(v,w)
$$
for $w\in \F_2^m(0_k)$. 
Since 
$$\cI\big((\b_{0,k}\f_v+\b_{1,k}\f_{\sigma_k\cdot v}),
(\psi_v-\psi_{\sigma_k\cdot v})\big)=0,$$
we have 
$$\cI(\f,(\psi_v-\psi_{\sigma_k\cdot v}))=0$$
for any element $\f\in W_k^\perp$.
The restriction of the expression of $\pr_k$ to $W_k$ is the identity,  
and that to $W_k^\perp$ is the zero map.
\end{proof}

\begin{lemma}
\label{lem:refA}
\begin{itemize}
\item[$\mathrm{(i)}$]
The map $\cR_{k}:\cH^m_{\C(\a)}(\na_T)\to \cH^m_{\C(\a)}(\na_T)$ 
is expressed as
$$\f\mapsto (1\!-\!c_k)\f\!+\!
\sum_{v\in \F_2^m(0_k)}
\frac{\b_{1,k}\Pi\b_v
}{(2\pi\sqrt{-1})^m}
\cI\big(\f,(\psi_v\!-\!\psi_{\sigma_k\cdot v})\big)
(\f_v\!-\!\f_{\sigma_k\cdot v}).
$$
\item[$\mathrm{(ii)}$]
The map $\cR_{k,v}:\cH^m_{\C(\a)}(\na_T)\to \cH^m_{\C(\a)}(\na_T)$ 
is expressed as
$$\f \mapsto 
\frac{-a\Pi\b_v
}{(2\pi\sqrt{-1})^m}\cI(\f ,\psi_{v})\psi_{v}.$$
\end{itemize}

\end{lemma}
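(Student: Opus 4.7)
The plan is to combine the spectral information from Lemma \ref{lem:residue} with the orthogonality provided by the intersection form. In both parts, $\cR_k$ and $\cR_{k,v}$ are diagonalizable with at most two eigenvalues and have explicit eigenspaces; thus each map is completely determined once one knows (a) its eigenvalues and (b) the projection onto each eigenspace. The intersection form supplies these projections in a basis-free manner via Proposition \ref{prop:Int-cohom}.

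For part (i), since $\cR_k$ vanishes on $W_k$ and acts as multiplication by $1-c_k$ on the complementary eigenspace $W_k^\perp$, we have the operator identity $\cR_k = (1-c_k)(I - \pr_k)$ on $\cH^m_{\C(\a)}(\na_T)$. Substituting the formula for $\pr_k$ from Lemma \ref{lem:projection} and using the algebraic identity $\b_{0,k}+\b_{1,k} = c_k-1$ (which follows from $\b_{0,k}=b_k$ and $\b_{1,k}=c_k-1-b_k$) to cancel the factor $\b_{0,k}+\b_{1,k}$ in the denominator of $\pr_k$ against the scalar $-(1-c_k)$ in the numerator yields exactly the claimed expression.

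For part (ii), Lemma \ref{lem:residue}(ii) shows that the image of $\cR_{k,v}$ is the line $\C(\a)\cdot\psi_v$ and that $\cR_{k,v}$ vanishes on the hyperplane $W_v^\perp=\{\f:\cI(\f,\psi_v)=0\}$. Consequently $\cR_{k,v}(\f)=\lambda(\f)\psi_v$ for some linear functional $\lambda$ that factors through $\cI(\cdot,\psi_v)$, i.e.\ $\lambda(\f)=c\,\cI(\f,\psi_v)$ for a scalar $c\in \C(\a)$. To pin down $c$, I would evaluate both sides on $\f=\psi_v$: the left side equals $(\Sigma\b_v-a)\psi_v$ by Lemma \ref{lem:residue}(ii), while Proposition \ref{prop:Int-cohom} gives $\cI(\psi_v,\psi_v) = (2\pi\sqrt{-1})^m(a-\Sigma\b_v)/(a\Pi\b_v)$, forcing $c=-a\Pi\b_v/(2\pi\sqrt{-1})^m$. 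The main (though mild) obstacle is bookkeeping: one must keep the two algebraic identities $\b_{0,k}+\b_{1,k}=c_k-1$ and the value of $\cI(\psi_v,\psi_v)$ straight, and verify that the linear functional is well defined on all of $\cH^m_{\C(\a)}(\na_T)$, which is immediate since $W_v^\perp$ has codimension one.
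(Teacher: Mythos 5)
Your proposal is correct and follows essentially the same route as the paper: part (i) is exactly the identity $\cR_k=(1-c_k)(\mathrm{id}-\pr_k)$ combined with Lemma \ref{lem:projection} and $\b_{0,k}+\b_{1,k}=c_k-1$, and part (ii) is the paper's computation $\f\mapsto(\Sigma\b_v-a)\cI(\f,\psi_v)\cI(\psi_v,\psi_v)^{-1}\psi_v$ phrased as determining the proportionality constant by evaluating at $\psi_v$. The only detail the paper adds that you omit is the remark that the resulting formulas remain valid at the degenerate specializations $c_k=1$ and $\Sigma\b_v=a$, where the eigenvalue arguments themselves break down.
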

\begin{proof}
(i) \quad 
At first, we assume that $c_k\ne1$ when we assign a complex value to it.
The projection from $\cH^m_{\C(\a)}(\na_T)$ to 
the eigenspace $W_k^\perp$ of $\cR_k$ with eigenvalue $1-c_k$
is expressed as $\f\mapsto \f-\pr_k(\f)$. Thus we have 
$$\cR_k(\f)=(1-c_k)(\f-\pr_k(\f))=(1-c_k)\f+(\b_{0,k}+\b_{1,k})\pr_k(\f).$$
Lemma \ref{lem:projection} implies the expression.
Note that this expression is valid even in the case $c_k=1$.

\smallskip\noindent
(ii)\quad 
At first, we assume $\Sigma\b_v-a\ne0$ for a given $v\in \F_2^m$
when we assign complex values to them.
By Lemma \ref{lem:residue} (ii),
$\cR_{k,v}$ is characterized  as
$$\f \mapsto (\Sigma\b_v-a)\cI(\f ,\psi_{v})
\cI(\psi_{v},\psi_{v})^{-1}\psi_{v}.$$
By Proposition \ref{prop:Int-cohom}, we have 
$$\cI(\psi_{v},\psi_{v})
=(2\pi\sqrt{-1})^m\frac{-(\Sigma\b_v-a)}{a\Pi\b_v
},$$ 
which gives the expression. 
This expression is valid even in the case $\Sigma\b_v-a=0$. 
\end{proof}

\begin{theorem}
\label{th:connection}
Suppose that (\ref{eq:non-integral}) 
when we assign complex values to the parameters.
The restriction of $\na_X$ to the space $\cH^m_{\C(\a)}(\na_T)$ 
is expressed as 
\begin{eqnarray*}
\f&\mapsto&\hspace{4mm} 
\sum_{k=1}^m
(1-c_k)\dfrac{dx_k}{x_k}\wedge \f
+\sum_{k=1}^m
\sum_{v\in \F_2^m(0_k)}\dfrac{\b_{1,k}\Pi\b_v}{(2\pi\sqrt{-1})^m}
\cI(\f,(\psi_v-\psi_{\sigma_k\cdot v}))
\dfrac{dx_k}{x_k}\wedge(\f_v-\f_{\sigma_k\cdot v})\\
& &
{+\sum_{v\in \F_2^m}
\dfrac{-a\Pi\b_v}
{(2\pi\sqrt{-1})^m}\cI(\f ,\psi_{v})\dfrac{d(1-v\tr x)}{1-v\tr x}
\wedge\psi_{v},}
\end{eqnarray*}
where 
$\f_v$ and $\psi_v$ are given in (\ref{eq:frame}) and 
Proposition \ref{prop:simplex}, respectively,  
$\Pi\b_v=\prod_{i=1}^m\b_{v_i,i}$ for 
$v=(v_1,\dots,v_m)\in \F_2^m$, 
and we regard $d(1-v\tr x)$ as $0$ for $v=(0,\dots,0)$.
\end{theorem}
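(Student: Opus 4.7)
The plan is to first write $\na_X(\f)$ as a logarithmic 1-form with simple poles only along the components $\{x_k=0\}$ and $\{1-w\tr x=0\}$ of the singular locus, identify the corresponding residues with the operators $\cR_k$ and $\cR_{k,w}$, and then apply Lemma \ref{lem:refA} to express these residues through the intersection form.

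By $\C(\a)$-linearity of $\na_X$ on $\cH^m_{\C(\a)}(\na_T)$, it suffices to analyze the polar behavior on the frame elements $\f=\f_v$. Proposition \ref{prop:PDE} shows that $\na_k(\f_v)$ consists of at most two polar terms, one of type $1/x_k$ and one of type $1/(1-v\tr x)$ (the latter only when $v_k=1$), with no regular part. Passing to a general $\f$ and collecting residues yields
\begin{equation*}
\na_k(\f)=\frac{\cR_k(\f)}{x_k}-\sum_{w\in \F_2^m,\,w_k=1}\frac{\cR_{k,w}(\f)}{1-w\tr x},
\end{equation*}
the minus sign in the second sum arising from $\mathrm{Res}_{x_k=x_k^*(w)}\bigl((1-w\tr x)^{-1}\bigr)=-1$, where $x_k^*(w)=1-\sum_{j\ne k}w_jx_j$ is the $k$-th coordinate of $S_w\cap L_k$.

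Wedging with $dx_k$, summing over $k$, and using the identity $-\sum_{k:\,w_k=1}dx_k=d(1-w\tr x)$ regroups the polar contributions at each divisor $S_w$ into a single logarithmic form:
\begin{equation*}
\na_X(\f)=\sum_{k=1}^m \frac{dx_k}{x_k}\wedge \cR_k(\f)+\sum_{w\in \F_2^m,\,w\ne 0}\frac{d(1-w\tr x)}{1-w\tr x}\wedge \cR_{k,w}(\f),
\end{equation*}
where in the second sum $\cR_{k,w}(\f)$ is independent of the choice of $k$ with $w_k=1$; this $k$-independence is transparent from the manifestly $k$-free formula in Lemma \ref{lem:refA}(ii) and is essential for the rearrangement to be consistent.

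Finally one substitutes the explicit expressions of Lemma \ref{lem:refA}. Part (i) splits $\cR_k(\f)$ into the scalar $(1-c_k)\f$ and the intersection-form sum over $v\in \F_2^m(0_k)$, producing the first two terms of the statement; part (ii) converts $\cR_{k,w}(\f)$ into $\tfrac{-a\Pi\b_w}{(2\pi\sqrt{-1})^m}\cI(\f,\psi_w)\psi_w$, producing the third. The assumption (\ref{eq:non-integral}) guarantees that all intersection numbers involved are well-defined. The main obstacle is the sign and indexing bookkeeping at the second step—verifying that the residue contributions at all $k$ with $w_k=1$ coherently assemble into one log-form along $S_w$; once this is handled via the $k$-independence of $\cR_{k,w}$, the remainder is a direct substitution.
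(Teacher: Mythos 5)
Your proof is correct and follows essentially the same route as the paper's: use Proposition \ref{prop:PDE} to see that $\na_X$ has only simple poles along $x_k=0$ and $1-w\tr x=0$, write it as $\sum_k\big(\cR_k/x_k-\sum_v\cR_{k,v}/(1-v\tr x)\big)dx_k$, and substitute the intersection-form expressions of Lemma \ref{lem:refA}. The only difference is one of explicitness: the paper states this decomposition in one line, whereas you spell out the residue sign $\mathrm{Res}\big((1-w\tr x)^{-1}\big)=-1$, the regrouping $-\sum_{k:\,w_k=1}dx_k=d(1-w\tr x)$, and the $k$-independence of $\cR_{k,w}$ that makes the regrouping legitimate.
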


\begin{proof}
By Proposition \ref{prop:PDE}, we see that the connection $\na_X$ 
admits simple poles only along $S\subset (\P^1)^m$.
Thus it is expressed as 
$$\sum_{k=1}^m \Big(
\frac{\cR_{k}}{x_k}-\sum_{v\in \F_2^m}\frac{\cR_{k,v}}{1-v\tr x}
\Big)dx_k.$$
Use the expressions of $\cR_{k}$ and $\cR_{k,v}$ in Lemma \ref{lem:refA}.
\end{proof}

By using our frame $\{\f_v\}_{v\in \F_2^m}$ of $\cH^m(\na_T)$, 
we represent the connection $\na_X$  by a matrix.
We set a column vector $\varPhi$ by arraying $\f_v$'s by the total order 
in Definition \ref{def:orders}: 
$$\varPhi=\tr(\f_{(0,\dots,0)},\f_{(1,0,\dots,0)},\f_{(0,1,0,\dots,0)},
\dots,\f_{(1,\dots,1)}).$$
Let $e_v$ $(v\in \F_2^m)$ be the unit row vectors of size $2^m$ satisfying 
$\f_v=e_v\varPhi.$
Put 
$$f_v=\frac{a-\Sigma\b_v}{a} e_v
-\sum_{j=1}^m\frac{\b_{1-v_j,j}}{a}e_{\sigma_j\cdot v},$$
then we have 
$$\psi_v=f_v\varPhi$$
by Proposition \ref{prop:simplex}.
\begin{cor}
\label{cor:connection matrix}
Suppose that (\ref{eq:non-integral}) 
when we assign complex values to the parameters.
The map $\na_X$ is represented 
by the frame $\{\f_v\}_{v\in \F_2^m}$ of $\cH^m(\na_T)$ as
$$\na_X \varPhi=\Xi_\varPhi\wedge \varPhi,$$
\begin{eqnarray*} 
\Xi_\varPhi&=&
\hspace{4mm}
{\sum_{k=1}^m(1-c_k)\mathrm{id}_{2^m}\dfrac{dx_k}{x_k}}
{+\sum_{k=1}^m
\sum_{v\in \F_2^m(0_k)}(\b_{1,k}\Pi\b_v)
C\tr(f_v-f_{\sigma_k\cdot v})(e_v-e_{\sigma_k\cdot v})
\dfrac{dx_k}{x_k}}\\
& &{+\sum_{v\in \F_2^m}
{(-a\Pi\b_v)}C\tr f_v f_v\dfrac{d(1-v\tr x)}{1-v\tr x},}
\end{eqnarray*}
where $\mathrm{id}_{2^m}$ is the unit matrix of size $2^m$ and 
the intersection matrix $C$ is given in Proposition 
\ref{prop:Int-cohom}. 
\end{cor}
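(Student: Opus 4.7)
The plan is to translate the coordinate-free formula for $\na_X$ on $\cH^m_{\C(\a)}(\na_T)$ given in Theorem \ref{th:connection} into its matrix form relative to the frame $\{\f_v\}_{v\in \F_2^m}$ assembled as the column vector $\varPhi$. The central mechanism is $\C(\a)$-bilinearity of the intersection form combined with the definition of the intersection matrix $C$ in Proposition \ref{prop:Int-cohom}: for any row vectors $a,b\in \C(\a)^{2^m}$,
$$
\cI(a\,\varPhi,\,b\,\varPhi)\;=\;(2\pi\sqrt{-1})^m\,a\,C\,\tr b.
$$
Since $\f_w=e_w\varPhi$ by definition and $\psi_v=f_v\varPhi$ by Proposition \ref{prop:simplex}, this specializes in particular to $\cI(\f_w,\psi_v)=(2\pi\sqrt{-1})^m\,e_w\,C\,\tr f_v$, which eliminates every intersection number appearing in Theorem \ref{th:connection} in favor of explicit matrix products of rational functions.

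Next I would substitute $\f=\f_w$ into Theorem \ref{th:connection} and match the result with the $w$-th entry of $\Xi_\varPhi\wedge\varPhi$. The diagonal first term contributes exactly the block $\sum_k(1-c_k)\,\mathrm{id}_{2^m}\,dx_k/x_k$. For the middle term, the identity above rewrites each scalar coefficient as $\b_{1,k}\Pi\b_v\,e_w\,C\,\tr(f_v-f_{\sigma_k\cdot v})$, while the form factor decomposes as $\f_v-\f_{\sigma_k\cdot v}=(e_v-e_{\sigma_k\cdot v})\varPhi$; multiplying the column $C\,\tr(f_v-f_{\sigma_k\cdot v})$ by the row $(e_v-e_{\sigma_k\cdot v})$ produces a rank-one $2^m\times 2^m$ matrix whose $w$-th row, wedged with $dx_k/x_k$ and acting on $\varPhi$, reproduces that summand. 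Summing over $k$ and $v\in \F_2^m(0_k)$ gives the second block of $\Xi_\varPhi$. The third term is handled identically and yields the rank-one block $-a\,\Pi\b_v\,C\,\tr f_v\,f_v$ wedged with $d(1-v\tr x)/(1-v\tr x)$.

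The only subtlety is bookkeeping of row versus column conventions: $e_v$ and $f_v$ must be read as row vectors so that $\tr f_v$ is a column, the outer products $C\,\tr(f_v-f_{\sigma_k\cdot v})(e_v-e_{\sigma_k\cdot v})$ and $C\,\tr f_v\,f_v$ are genuine $2^m\times 2^m$ matrices of rational functions, and the wedge-matrix product $\Xi_\varPhi\wedge\varPhi$ agrees entrywise with the output of Theorem \ref{th:connection}. Beyond this there is no analytic input needed—Theorem \ref{th:connection}, Proposition \ref{prop:simplex} and Proposition \ref{prop:Int-cohom} suffice—and I do not expect any substantive obstacle.
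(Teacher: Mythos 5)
Your proposal is correct and follows essentially the same route as the paper: the paper's proof likewise identifies a row vector $z$ with the class $z\,\varPhi$, writes $\cI(\f,\psi_v)=(2\pi\sqrt{-1})^m\, z\, C\tr f_v$, and then reads off $\Xi_\varPhi$ from Theorem \ref{th:connection}. Your additional remarks on the rank-one outer products and the row/column bookkeeping simply make explicit what the paper leaves implicit.
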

\begin{proof}
We identify a row vector $z=(\dots,z_v,\dots)\in \C(\a)^{2^m}$ with 
an element $\f=z\; \varPhi\in \cH^m_{\C(\a)}(\na_T)$. 
Then the intersection form is expressed as
$$\cI(\f,\psi_{v})=(2\pi\sqrt{-1})^m\; z\;C\tr f_v.$$
Thus we have our representation $\Xi_\varPhi$ of $\na_X$  
by Theorem \ref{th:connection}. 
\end{proof}

We define a vector valued function $F(x)=\tr(\dots, F_v(x),\dots)$ 
in $\D$ by 
$$
F_{(0,\dots,0)}(x)=
\Big(\prod_{i=1}^m \frac{\G(b_i)\G(c_i\!-\!b_i)}{\G(c_i)}\Big)
F_A(a,b,c;x),\quad 
F_v(x)=\Big(\prod_{1\le i\le m}^{v_i=1}x_i\pa_i\Big)\cdot F_{(0,\dots,0)}(x),
$$
where $F_v(x)$ $(v\in \F_2^m)$  are arrayed by the total order 
in Definition \ref{def:orders}.

\begin{cor}[Pfaffian system of $F_A(a,b,c)$]
\label{cor:Pfaffian}
Suppose that (\ref{eq:non-integral}) 
when we assign complex values to the parameters.
The vector valued function $F(x)$ satisfies a Pfaffian system
$$d_XF(x)=(P\; \Xi_\varPhi\; P^{-1}) F(x),$$
where 
$\Xi_\varPhi$ is given in Corollary \ref{cor:connection matrix}
and $P=(p_{vw})_{v,w\in\F_2^m}$ is defined by 
$$p_{vw}=
\left\{
\begin{array}{cl}
\displaystyle{\prod_{1\le i\le m}^{v_i=1}(-\b_{w_i,i})}
&\textrm{if }  v\succeq w, \\[6mm]
0&\textrm{otherwise}.
\end{array}
\right.
$$
\end{cor}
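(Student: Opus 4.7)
The plan is to realize $F(x)$ as $P\cdot G(x)$, where $G(x)$ is a canonical integral vector built from the frame $\{\f_v\}_{v\in \F_2^m}$ of $\cH^m(\na_T)$, and then transport the connection equation on $G$ through the change of basis $P$. Concretely, define
$$
G_v(x):=\int_{\mathrm{reg}(0,1)^m} u(a,b,c;x,t)\, \f_v,\qquad v\in\F_2^m,
$$
and let $G(x)=\tr(\dots,G_v(x),\dots)$ be arrayed by the total order of Definition \ref{def:orders}. Since $\f_{(0,\dots,0)}=dT/(t_1\cdots t_m)$, the Euler representation (\ref{eq:Euler}) gives $G_{(0,\dots,0)}=F_{(0,\dots,0)}$.

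The first step is to prove $F=P\,G$. Iterating (\ref{eq:parderiv})---the iteration is unambiguous because the operators $\na_i$ commute on $\cH^m(\na_T)$, reflecting the flatness of $\na_X$, which also follows from the equality of mixed partials $\pa_i\pa_j F_{(0,\dots,0)}=\pa_j\pa_i F_{(0,\dots,0)}$---yields
$$
F_v(x)=\Big(\prod_{v_i=1}x_i\pa_i\Big)F_{(0,\dots,0)}(x)=\int u(a,b,c;x,t)\cdot \Big(\prod_{v_i=1}x_i\na_i\Big)\f_{(0,\dots,0)}.
$$
Corollary \ref{cor:higher} now expands the integrand as $\sum_{w\preceq v}p_{vw}\f_w$, and integrating termwise gives $F_v=\sum_w p_{vw}G_w$. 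Invertibility of $P$ comes from the fact that $v\succ w$ forces $v>w$ in the total order, so $P$ is lower triangular with diagonal entries $p_{vv}=\prod_{v_i=1}(-\b_{1,i})$, which are nonzero under (\ref{eq:non-integral}); hence $P$ is invertible over $\C(\a)$.

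The second step is to verify $d_XG=\Xi_\varPhi\,G$. Applying (\ref{eq:parderiv}) coordinate-wise, $\pa_kG_v=\int u\,\na_k\f_v$, so $d_XG_v=\int u\,\na_X\f_v$. Corollary \ref{cor:connection matrix} writes $\na_X\f_v=\sum_w(\Xi_\varPhi)_{vw}\wedge\f_w$, and since each $(\Xi_\varPhi)_{vw}$ is a $1$-form in $x$ alone, it pulls outside the $t$-integration, producing $d_XG_v=\sum_w(\Xi_\varPhi)_{vw}G_w$. As $P$ has entries in $\C(\a)$ independent of $x$, $d_XP=0$, and we conclude
$$
d_XF=d_X(PG)=P\,d_XG=P\,\Xi_\varPhi\,G=(P\,\Xi_\varPhi\,P^{-1})F,
$$
which is the claimed Pfaffian system.

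Given that Corollaries \ref{cor:higher} and \ref{cor:connection matrix} do the heavy lifting, the only genuine difficulty is in the first step: legitimizing the iteration of (\ref{eq:parderiv}) through the regularization of \cite{AoKi}, and recognizing that the coefficients produced by Corollary \ref{cor:higher} match precisely the combinatorial $p_{vw}$ appearing in the statement. The resulting change of basis from the cohomological frame $\{\f_v\}$ to the ``physical'' frame $\{F_v\}$ of iterated logarithmic derivatives is exactly $P$, lower triangular and explicit, which is what makes the final conjugation $P\,\Xi_\varPhi\,P^{-1}$ meaningful.
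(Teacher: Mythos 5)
Your proposal is correct and follows essentially the same route as the paper: apply the integral representation (\ref{eq:Euler}) and the differentiation rule (\ref{eq:parderiv}), use Corollary \ref{cor:higher} to obtain $F(x)=P\int_{\mathrm{reg}(0,1)^m}u(t,x)\varPhi$, observe that $P$ is lower triangular with nonzero diagonal (hence invertible), and conjugate the connection matrix of Corollary \ref{cor:connection matrix}. The only difference is that you spell out the intermediate step $d_XG=\Xi_\varPhi G$, which the paper leaves implicit.
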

\begin{proof}
By the integral representation (\ref{eq:Euler}) of $F_A(a,b,c;x)$ and 
the equation (\ref{eq:parderiv}), we have 
$$
F_v(x)=\int_{\mathrm{reg}(0,1)^m}u(t,x)
\Big(\prod_{1\le i\le m}^{v_i=1}x_i\na_i\Big)\cdot \f_{(0,\dots,0)}.
$$
Corollary \ref{cor:higher} implies 
$$F(x)=P\int_{\mathrm{reg}(0,1)^m}u(t,x)\varPhi.$$
Since $P$ is a lower triangular matrix with non-zero diagonal entries,
it is invertible.
Hence 
$F(x)$ satisfies the Pfaffian system.
\end{proof}

\begin{remark}
The $(v,w)$-entry of $P^{-1}$ is
$$
\left\{
\begin{array}{cl}
{\prod\limits_{1\le i\le m}^{v_i=1,w_i=0}\b_{0,i}}\Big/
{\prod\limits_{1\le i\le m}^{v_i=1}(-\b_{1,i})}
&\textrm{if } v\succeq w, 
\\[6mm]
0&\textrm{otherwise}.
\end{array}
\right.
$$
\end{remark}

\bigskip 

\end{document}